\documentclass{amsart}
\usepackage[dvips]{graphicx}
\usepackage{enumerate}
\usepackage{amssymb}
\usepackage{amscd}
\usepackage{amsmath}
\usepackage{color}
\usepackage{here}
\usepackage{mathrsfs}

\usepackage{lineno}

\begin{document}

\title[Bounded cohomology and optimal separating constant]{Bounded cohomology and optimal separating constant for representations}

\author{Yushi Nakano}
\address[Yushi Nakano]{Department of Mathematics, Hokkaido University, Kita 10, Nishi 8, Kita-Ku, Sapporo, Hokkaido, 060-0810, Japan}
\email{yushi.nakano@math.sci.hokudai.ac.jp}

\author{Teruhiko Soma}
\address[Teruhiko Soma]{Department of Mathematical Sciences, Tokyo Metropolitan University, 
Minami-Ohsawa 1-1, Hachioji, Tokyo 192-0397, Japan}
\email{tsoma@tmu.ac.jp}

\begin{abstract}
Let $\Sigma$ be a closed oriented surface of genus $>1$ and $M$ 
a complete hyperbolic 3-manifold with a marking $i:\Sigma\longrightarrow M$.
We consider the case that $M$ has no parabolic cusps and at least one of the two ends 
is simply degenerate.
For $\varGamma=\pi_1(\Sigma)$, let $\rho_M:\varGamma\longrightarrow \mathrm{PSL}_2(\mathbb{C})$ be the holonomy of $M$ and 
$\rho:\varGamma\longrightarrow \mathrm{PSL}_2(\mathbb{C})$ any representation in $\mathrm{PSL}_2(\mathbb{C})$.
We will show that, if $\rho$ is discrete and non-faithful, then 
\[
\|[\mathrm{Vol}(\rho)]-[\mathrm{Vol}(\rho_M)]\|_\infty\geq \boldsymbol{v}_3
\]
holds, where 
$[\mathrm{Vol}(\rho)]$ denotes the bounded fundamental class of $\rho$ in the bounded cohomology $H_b^3(\varGamma,\mathbb{R})$ of $\varGamma$ and $\boldsymbol{v}_3$ is the volume of a regular ideal 3-simplex in $\mathbb{H}^3$.
As an application, we present a rigidity theorem for $\rho_M$ in the set of representations $\rho$ of $\varGamma$ in $\mathrm{PSL}_2(\mathbb{C})$ in terms of $[\mathrm{Vol}(\rho)]$. 
The rigidity theorem implies that $\boldsymbol{v}_3$ is the optimal separating constant.
\end{abstract}

\subjclass[2020]{57K32, 30F40}
\keywords{Hyperbolic 3-manifolds, Kleinian groups, bounded cohomology}
\thanks{}
\date{\today}

\maketitle

\newtheorem{theorem}{Theorem}[section]
\newtheorem{cor}[theorem]{Corollary}
\newtheorem{lemma}[theorem]{Lemma}
\newtheorem{prop}[theorem]{Proposition}

\newtheorem{mtheorem}{Theorem}
\renewcommand{\themtheorem}{\Alph{mtheorem}}
\newtheorem{mcorollary}[mtheorem]{Corollary}

\theoremstyle{definition}
\newtheorem{definition}[theorem]{Definition}
\newtheorem{example}[theorem]{Example}
\newtheorem{remark}[theorem]{Remark}
\newtheorem{claim}[theorem]{Claim}
\newtheorem*{question}{Question}

\numberwithin{figure}{section}
\numberwithin{equation}{section}

Throughout this paper, we denote by $\Sigma$ a closed oriented hyperbolic surface of 
genus $>1$ 
and by $M$ a complete oriented hyperbolic 3-manifold 
with a marking $i:\Sigma\longrightarrow M$.
We consider the 3-cocycle $\omega_M:C_3(M)\longrightarrow \mathbb{R}$ such that, 
for any singular 3-simplex $\sigma:\Delta^3\longrightarrow M$, $\omega_M(\sigma)$ is the oriented volume of the 3-simplex $\mathrm{straight}(\sigma)$ 
obtained by straightening $\sigma$.
It is a well-know fact in hyperbolic geometry that the supremum norm $\|\omega_M\|_\infty$ of $\omega_M$ is equal to the volume of a regular ideal simplex $\boldsymbol{v}_3=1.01494\dots$ in $\mathbb{H}^3$.
So $\omega_M$ represents the bounded fundamental $[\omega_M]$ of the bounded cohomology $H_b^3(M,\mathbb{R})$ with $\|[\omega_M]\|_\infty\leq \boldsymbol{v}_3$.

In \cite[Theorem A]{so_BC}, \cite[Section 6]{om}, \cite[Corollary 1.5]{fa_gafa}, 
\cite[Theorems A and C]{so_tran} and other works, 
rigidity theorems for completed hyperbolic 3-manifolds 
$M_j$ $(j=0,1)$ with markings $i_j:\Sigma\longrightarrow M_j$ are proved under suitable situations.
In fact,  they presented a 
separating constant (or a threshold) $c>0$ such that, if 
\[
\|i_0^*[\omega_{M_0}]-i_1^*[\omega_{M_1}]\|_\infty <c
\]
in $H_b^3(\Sigma,\mathbb{R})$, 
then there exists a marking-preserving bi-Lipschitz map $\varphi:M_0\longrightarrow M_1$.
Moreover in \cite{so_tran} it is also shown that $\boldsymbol{v}_3$ is the optimal separating 
constant.

Let $\varGamma$ be the fundamental group of $\Sigma$ and $\mathcal{R}(\varGamma)$ the set of representations $\rho:\varGamma\longrightarrow \mathrm{PSL}_2(\mathbb{C})$.
The holonomy $\rho_M$ of $M$ is a discrete and faithful element of $\mathcal{R}(\varGamma)$.
Farre \cite{fa_imrn} defined the bounded fundamental class $[\mathrm{Vol}(\rho)]$ of $\rho$ in $H_b^3(\varGamma,\mathbb{R})$ such that 
$[\mathrm{Vol}(\rho_M)]$ is equal to $i^* [\omega_M]$ under the natural identification  $H_b^3(\varGamma,\mathbb{R})= H_b^3(\Sigma,\mathbb{R})$.
In \cite[Theorem 3.12]{fa_imrn} and \cite[Theorem D]{so_tran}, rigidity theorems 
for $\rho_M$ in $\mathcal{R}(\varGamma)$ are proved under suitable 
conditions and with some separating constants.
In the proofs of them, Farre's separation theorem (\cite[Theorem 4.11]{fa_imrn}) 
plays an important role, which shows that,  if $\rho(\varGamma)$ is dense in $\mathrm{PSL}_2(\mathbb{C})$, then the following inequality
\begin{equation}\label{eqn_geq_v3}
\|[\mathrm{Vol}(\rho)]-[\mathrm{Vol}(\rho_M)]\|_\infty\geq \boldsymbol{v}_3
\end{equation}
holds.

Theorem \ref{thm_A} shows that \eqref{eqn_geq_v3} 
still holds in some cases even if  $\rho(\varGamma)$ is not dense in $\mathrm{PSL}_2(\mathbb{C})$.

\begin{mtheorem}\label{thm_A}
Suppose that $M$ has no parabolic cusps and at least one of the two ends of $M$ 
is simply degenerate.
Then, for any discrete and non-faithful element $\rho$ of $\mathcal{R}(\varGamma)$  
(possibly $\rho(\varGamma)$ contains parabolic or elliptic elements), the inequality 
\eqref{eqn_geq_v3} holds.
\end{mtheorem}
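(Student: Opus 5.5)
Since $\rho$ is discrete and non-faithful, there is a nontrivial $\gamma\in\ker\rho$. The strategy is to exhibit bounded $3$-cycles (in the $\ell^1$ sense) of norm about $1$ on which $[\mathrm{Vol}(\rho)]$ vanishes while $[\mathrm{Vol}(\rho_M)]$ takes values close to $\boldsymbol{v}_3$. This follows the pattern of Farre's separation theorem \cite[Theorem 4.11]{fa_imrn} and the optimality argument of \cite{so_tran}.

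\textbf{Step 1: a degenerate end and ending lamination.} Fix a simply degenerate end $\mathcal{E}$ of $M$, with ending lamination $\lambda$, which is filling on $\Sigma$ because $M$ has no cusps. Choose simple closed curves $\alpha_n$ on $\Sigma$ whose geodesic representatives $\alpha_n^*$ exit $\mathcal{E}$ and converge to $\lambda$. Since $\lambda$ fills, the pleated surfaces $f_n:\Sigma\to M$ realizing pants decompositions containing $\alpha_n$ also exit $\mathcal{E}$. Consecutive surfaces $f_n,f_{n+1}$ cobound regions of volume tending to infinity, and these regions can be filled by straight ideal-like tetrahedra that are nearly regular. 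The standard construction (as in \cite{so_BC,so_tran}) then gives $3$-chains $z_n$ in $\Sigma\times[0,1]$, transported to $\varGamma$-chains via the marking. Each $z_n$ is the difference of two chains that are rel boundary homotopic to $f_n$ and $f_{n+1}$, and it satisfies
\[
\langle [\mathrm{Vol}(\rho_M)], z_n\rangle \ge (\boldsymbol{v}_3-\epsilon_n)\,\|z_n\|_1, \qquad \|\partial z_n\|_1/\|z_n\|_1\to 0 .
\]
By the usual duality argument for bounded cohomology (controlling boundary terms with the $\ell^1$ norm of $\partial z_n$), these chains compute a lower bound for the seminorm $\|\cdot\|_\infty$.

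\textbf{Step 2: vanishing for $\rho$.} This is the main obstacle: we must choose the $z_n$ so that $\rho$ collapses them. Since the $\alpha_n$ are arbitrary on the way to $\lambda$, I would use the kernel element $\gamma$ together with the mapping class action. Through a sequence of Dehn twists or pseudo-Anosov-like moves, the chain $z_n$ can be built from simplices whose vertex tuples, as tuples in $\varGamma$, differ only by multiplication by elements of $\ker\rho$. Their $\rho$-images then coincide pairwise with opposite orientations, so the $\rho$-straightened volume of $z_n$ cancels up to $o(\|z_n\|_1)$. The worry is that $\ker\rho$ may contain no simple curves. For this reason I would instead use that $\rho(\varGamma)$ is discrete and, being non-faithful, has infinite index image behaviour. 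The key fact to invoke is that the quotient $\mathbb{H}^3/\rho(\varGamma)$ has a map from $\Sigma$ that factors through a lower-complexity object: either a $3$-orbifold whose volume pulled back along $\rho$ restricted to a degenerating sequence is sublinear, or, via Farre's characterization, a class $[\mathrm{Vol}(\rho)]$ that is represented by a cocycle that is a coboundary on the relevant subgroups. I expect to split into the case where $\rho(\varGamma)$ is elementary or has a geometrically finite quotient, where $[\mathrm{Vol}(\rho)]$ is bounded by volumes of convex-core-type regions of bounded size (so its value on $z_n$ is $o(\|z_n\|_1)$), and the case where the quotient is itself degenerate. In that second case the non-faithfulness forces, via Thurston's covering theorem and tameness, the $\rho$-images of the surfaces $f_n$ to stay in a compact region rather than exit.

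\textbf{Step 3: conclusion.} Evaluating the difference of the two classes on the normalized chains $z_n/\|z_n\|_1$ gives a value at least $\boldsymbol{v}_3-\epsilon_n-o(1)$, hence the desired inequality $\|[\mathrm{Vol}(\rho)]-[\mathrm{Vol}(\rho_M)]\|_\infty\geq \boldsymbol{v}_3$. Elliptic and parabolic elements of $\rho(\varGamma)$ do not affect this argument, since straightening is defined in $\mathbb{H}^3$ on orbits, independent of torsion.
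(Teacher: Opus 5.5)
\paragraph{The gap is in Step 2.} It is not merely unproved: the vanishing you want there is false in general, so none of your suggested mechanisms can supply it.

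Here is a counterexample. Let $W$ be a closed hyperbolic 3-manifold fibering over the circle with fiber $\Sigma$. Let $\pi:M\to W$ be the infinite cyclic cover with deck transformation $\tau$, so $M$ is doubly degenerate and has no cusps. Let $F:W\to N$ be a map of degree $d>0$ to a closed hyperbolic $N$ that is not homotopic to a covering; for instance, take a degree-one map between closed hyperbolic manifolds that is not a homotopy equivalence and precompose it with a fibered finite cover.

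By Gromov--Thurston and Mostow, $d\,\mathrm{Vol}(N)<\mathrm{Vol}(W)$ and $\ker F_*\neq 1$. Moreover $\ker F_*\cap\varGamma\neq 1$, since otherwise $\ker F_*$ would be a normal infinite cyclic subgroup of $\pi_1 W$. Hence $\rho=F_*|_{\varGamma}$ is discrete and non-faithful, and Theorem~\ref{thm_A} applies.

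Now take your surfaces to be translates, $f_n=\tau^n\circ f_0$, with boundary cycles $\tau^n_*\sigma_0$. For any filling $z_n$, $\pi_*z_n$ is a cycle representing $n[W]$. Evaluating the representatives $(F\circ\pi)^*\omega_N$ and $\omega_M=\pi^*\omega_W$ therefore gives exactly $n\,d\,\mathrm{Vol}(N)$ and $n\,\mathrm{Vol}(W)$, while $\|\partial z_n\|_1$ stays bounded. So $\langle[\mathrm{Vol}(\rho)],z_n\rangle$ is asymptotically the fixed positive fraction $d\,\mathrm{Vol}(N)/\mathrm{Vol}(W)$ of $\langle[\mathrm{Vol}(\rho_M)],z_n\rangle$. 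Using $\|z_n\|_1\ge n\,\mathrm{Vol}(W)/\boldsymbol{v}_3$, your chains can only certify the bound $\boldsymbol{v}_3\bigl(1-d\,\mathrm{Vol}(N)/\mathrm{Vol}(W)\bigr)<\boldsymbol{v}_3$.

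The same example shows why each of your proposed mechanisms fails:
\begin{itemize}
\item A finite-covolume image is geometrically finite, yet $\|[\mathrm{Vol}(i_G)]\|_\infty=\boldsymbol{v}_3$ (Lemma~\ref{l_Lemma6.1}).
\item Staying in a compact region does not bound the volume term, because chains can wrap around $N$ linearly often.
\item The covering theorem says nothing about a non-injective $\rho$.
\item The kernel-pairing idea is never reconciled with $\|\partial z_n\|_1=o(\|z_n\|_1)$.
\end{itemize}
Torsion in $\rho(\varGamma)$ also matters once you use the geometry of $\mathbb{H}^3/\rho(\varGamma)$, as your Step 2 does; the paper removes it with Selberg's lemma.

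\paragraph{The missing idea is to reverse the roles of $\rho$ and $\rho_M$.} When $[\mathrm{Vol}(\rho)]\neq 0$, the witness must come from $\rho$, on a subgroup where $[\mathrm{Vol}(\rho_M)]$ dies. Write $G=\rho(\varGamma)$, torsion-free after Selberg's lemma.

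If $[\mathrm{Vol}(i_G)]=0$, then $[\mathrm{Vol}(\rho)]=\rho^*[\mathrm{Vol}(i_G)]=0$ and the bound is just $\|[\mathrm{Vol}(\rho_M)]\|_\infty=\boldsymbol{v}_3$. This is the only case your strategy handles.

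Otherwise, Lemma~\ref{l_Proposition 7.1} gives a free $K\le G$ with $\|i_{K,G}^*[\mathrm{Vol}(i_G)]\|_\infty=\boldsymbol{v}_3$. It uses virtual fibering and co-amenable kernels of maps to $\mathbb{Z}$ in the finite-volume case, and a boundary-surface subgroup at a degenerate end otherwise.

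Non-faithfulness enters through Lemma~\ref{l_Lemma4.1}: $\ker\rho$ is of general type, which yields a finitely generated free $A\le\varGamma$ with $\rho(A)=G$. Since $A$ has infinite index and $M$ has no cusps, Canary's covering theorem makes every end of $M_A$ geometrically finite. Hence $i_{A,\varGamma}^*[\mathrm{Vol}(\rho_M)]=0$ (Lemma~\ref{l_Lemma5.1}).

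Finally, lifting a free basis of $K$ into $A$ gives $s:K\to A$ with $(\rho|_A)\circ s=i_{K,G}$, and hence
\[
\|[\mathrm{Vol}(\rho)]-[\mathrm{Vol}(\rho_M)]\|_\infty\ge\|[\mathrm{Vol}(\rho|_A)]\|_\infty\ge\|s^*(\rho|_A)^*[\mathrm{Vol}(i_G)]\|_\infty=\boldsymbol{v}_3 .
\]
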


The no parabolic-cusp condition on $M$ is crucial in the proof of Lemma \ref{l_Lemma5.1}. 

In the case when $\rho$ is discrete and faithful, the quotient space $M_\rho=\mathbb{H}^3/\rho(\varGamma)$ is a complete hyperbolic 3-manifold which has 
 a fixed marking $i_\rho:\Sigma\longrightarrow M_\rho$ with $\rho_{M_\rho}=\rho$ up to conjugacy in $\mathrm{PSL}_2(\mathbb{C})$.

\medskip

Theorem \ref{thm_A} together with  \cite[Theorem A]{so_tran} 
shows the following rigidity theorem for $\rho_M$ in $\mathcal{R}(\varGamma)$.

\begin{mcorollary}[cf.\ {\cite[Theorem D]{so_tran}}]\label{thm_B}
Suppose that $M$ has no parabolic cusps and at least one of the two ends of $M$ 
is simply degenerate, which is denoted by $e$.
If $\rho$ is any element of $\mathcal{R}(\varGamma)$ satisfying  
\begin{equation}\label{eqn_thm_D}
\|[\mathrm{Vol}(\rho)]-[\mathrm{Vol}(\rho_M)]\|_\infty<\boldsymbol{v}_3,
\end{equation}
then the following {\rm (i)} and {\rm (ii)} hold.
\begin{enumerate}[\rm(i)]
\item
$\rho$ is discrete and faithful.
\item
There exists a marking-preserving homeomorphism $\varphi:M\longrightarrow M_\rho$ 
such that $\varphi|_E:E\longrightarrow \varphi(E)$ is bi-Lipschitz 
for some neighborhood $E$ of $e$ in $M$.
\end{enumerate}
\end{mcorollary}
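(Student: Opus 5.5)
We prove Corollary \ref{thm_B} by elimination: we rule out every possibility for $\rho$ other than discrete and faithful, and then apply \cite[Theorem A]{so_tran} to the resulting marked hyperbolic manifold $M_\rho$.

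\emph{Step 1: $\rho$ is discrete.} Suppose $\rho$ satisfies \eqref{eqn_thm_D} but $\rho(\varGamma)$ is not discrete. Let $G$ be the closure of $\rho(\varGamma)$ in $\mathrm{PSL}_2(\mathbb{C})$.

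\begin{itemize}
\item If $G=\mathrm{PSL}_2(\mathbb{C})$, that is, $\rho(\varGamma)$ is dense, then Farre's separation theorem \cite[Theorem 4.11]{fa_imrn} gives \eqref{eqn_geq_v3}. This contradicts \eqref{eqn_thm_D}.
\item Otherwise $G$ is a proper non-discrete closed subgroup. Its identity component is then nontrivial and proper, so $\rho(\varGamma)$ is elementary, or is conjugate into a subgroup stabilizing a round circle (a conjugate of $\mathrm{PSL}_2(\mathbb{R})$ or its $\mathbb{Z}/2$ extension), or is virtually solvable or compact. In each of these cases we want to show that $[\mathrm{Vol}(\rho)]=0$, or at least that its bounded class factors through a space where the volume cocycle is a coboundary. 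For elementary and virtually solvable images the class vanishes, since amenable groups have trivial bounded cohomology in positive degrees. If $\rho(\varGamma)$ preserves a totally geodesic $\mathbb{H}^2$ (or a circle), the straightened simplices are degenerate and have volume $0$, so $\mathrm{Vol}(\rho)=0$.
\end{itemize}

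In all these cases it then suffices to know that $\|[\mathrm{Vol}(\rho_M)]\|_\infty=\boldsymbol{v}_3$. This holds because $M$ has a simply degenerate end, and it is established in \cite{so_tran}: one takes arbitrarily long product regions in the degenerate end and fills them with nearly regular ideal simplices. This gives \eqref{eqn_geq_v3} and again contradicts \eqref{eqn_thm_D}. I expect this case distinction to be the main obstacle, in particular organizing the classification of closures so that each case is covered either by Farre's theorem or by the vanishing of the class. This is where I would lean on the cited results rather than reprove them.

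\emph{Step 2: $\rho$ is faithful.} Now $\rho$ is discrete. If it were non-faithful, Theorem \ref{thm_A} would give \eqref{eqn_geq_v3}, contradicting \eqref{eqn_thm_D}. This proves (i).

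\emph{Step 3: proof of (ii).} Since $\rho$ is discrete and faithful, $M_\rho=\mathbb{H}^3/\rho(\varGamma)$ is a complete hyperbolic 3-manifold with marking $i_\rho$, and $[\mathrm{Vol}(\rho)]=i_\rho^*[\omega_{M_\rho}]$. Hence \eqref{eqn_thm_D} reads
\[
\|i_\rho^*[\omega_{M_\rho}]-i^*[\omega_M]\|_\infty<\boldsymbol{v}_3 .
\]
This is exactly the hypothesis of \cite[Theorem A]{so_tran} with separating constant $\boldsymbol{v}_3$, for $M$ having no cusps and a simply degenerate end $e$. That theorem yields a marking-preserving homeomorphism $\varphi:M\to M_\rho$ that is bi-Lipschitz on a neighborhood $E$ of $e$. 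This gives (ii).
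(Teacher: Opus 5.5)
Your argument has the same skeleton as the paper's: Theorem~\ref{thm_A} rules out discrete non-faithful $\rho$, and \cite[Theorem A]{so_tran} gives (ii). In Step 3 the paper only rewrites the inequality in $H_b^3(M,\mathbb{R})$, via a marking-preserving homeomorphism and the isometric isomorphism $i_M^*$, before applying that theorem.

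The real difference is discreteness. The paper does not prove it; it imports Claim~1 from the proof of \cite[Theorem D]{so_tran}. You instead reprove it by classifying the closure of $\rho(\varGamma)$. If the image is dense, Farre's separation theorem applies. Otherwise the image is elementary or preserves a round circle, so $[\mathrm{Vol}(\rho)]=0$, and then $\|[\mathrm{Vol}(\rho_M)]\|_\infty=\boldsymbol{v}_3$ finishes the argument. For that equality the paper cites \cite[Theorem 1]{so_duke}, not \cite{so_tran}.

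Your route makes the corollary self-contained modulo Farre's theorem and \cite{so_duke}, at the cost of a case analysis. The analysis is sound: a non-elementary group whose closure has nontrivial identity component is either dense or preserves a round circle.

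One justification is wrong and needs fixing. An image with compact closure fixes a point $x\in\mathbb{H}^3$, but it need not be amenable as an abstract group. For example, $\mathrm{SO}(3)$ contains free subgroups, and $\varGamma$ surjects onto $F_2$, which embeds densely in $\mathrm{SO}(3)$. Such a $\rho$ is non-discrete and has non-amenable image, so vanishing of bounded cohomology for amenable groups does not apply. Instead, take the fixed point $x$ as base point. Then $\mathrm{Vol}_x(\rho)\equiv 0$, because all vertices $\rho(g_i)x$ coincide, exactly as in your circle case.

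For an image fixing an ideal point or preserving a pair of ideal points, your amenability argument does work, since such groups are solvable or virtually abelian. Even there, choosing an ideal base point gives the same degenerate-cocycle conclusion directly.
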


As is noted in \cite[Remark 0.1]{so_tran}, the separation constant $\boldsymbol{v}_3$ of \eqref{eqn_thm_D} is optimal.
In fact, if $\rho_i:\varGamma\longrightarrow \mathrm{PSL}_2(\mathbb{C})$ $(i=0,1)$ are discrete and faithful representations 
without parabolic elements and such that at least one end of $M_{\rho_0}$ is simply degenerate but the both ends of $M_{\rho_1}$ are 
geometrically finite, then \cite[Theorem 1]{so_duke} implies that 
\[
\|[\mathrm{Vol}(\rho_0)]-[\mathrm{Vol}(\rho_1)]\|_\infty=\|[\mathrm{Vol}(\rho_0)]\|_\infty= \boldsymbol{v}_3.
\]
However any marking-preserving homeomorphism $\varphi:M_{\rho_0}\longrightarrow M_{\rho_1}$ 
does not satisfy the condition (ii) of Corollary  \ref{thm_B} 
since any simply degenerate end is not bi-Lipschitz to a geometrically finite end.

\medskip

The following corollary follows immediately from Corollary \ref{thm_B} together with Sullivan's rigidity theorem \cite{su}.

\begin{mcorollary}\label{cor_C}
With the conditions as in Corollary \ref{thm_B}, 
suppose furthermore that the both ends of $M$ are simply degenerate.
Then $\rho$ is discrete, faithful and there exists a marking-preserving isometry $\varphi:M\longrightarrow M_\rho$.
\end{mcorollary}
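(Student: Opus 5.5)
By Corollary~\ref{thm_B}(i), $\rho$ is discrete and faithful. So $M_\rho=\mathbb{H}^3/\rho(\varGamma)$ is a complete hyperbolic 3-manifold with marking $i_\rho:\Sigma\longrightarrow M_\rho$. Corollary~\ref{thm_B}(ii) gives a marking-preserving homeomorphism $\varphi:M\longrightarrow M_\rho$ whose restriction to a neighborhood $E$ of the simply degenerate end $e$ is bi-Lipschitz. The task is to upgrade $\varphi$ to an isometry. The plan has two steps: show that $M_\rho$ is doubly degenerate with the same ending laminations as $M$, then apply a rigidity theorem for doubly degenerate manifolds.

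\textbf{Step 1: both ends of $M_\rho$ are simply degenerate.} By assumption both ends $e$ and $e'$ of $M$ are simply degenerate, and $M$ has no parabolic cusps. We may apply Corollary~\ref{thm_B} twice, once with the distinguished end $e$ and once with $e'$.
\begin{itemize}
\item For each end we get a marking-preserving homeomorphism onto $M_\rho$ that is bi-Lipschitz near that end.
\item A bi-Lipschitz map near a simply degenerate end sends it to a simply degenerate end. A geometrically finite end is not bi-Lipschitz to a degenerate one, as remarked after Corollary~\ref{thm_B}.
\item Since the homeomorphisms are marking-preserving, they send $e$ and $e'$ to the two distinct ends of $M_\rho\cong\Sigma\times\mathbb{R}$, respecting the orientation of the $\mathbb{R}$-factor.
\end{itemize}
Hence both ends of $M_\rho$ are simply degenerate. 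A bi-Lipschitz marking-preserving map near an end carries simple closed geodesics exiting the end to quasi-geodesics that exit the image end. Therefore the ending laminations of the corresponding ends agree.

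\textbf{Step 2: conclude an isometry.} Now $M$ and $M_\rho$ are doubly degenerate, have no parabolics, and have the same end invariants. At this point there are two routes.
\begin{itemize}
\item \emph{Ending lamination theorem.} Minsky and Brock--Canary--Minsky give a marking-preserving isometry directly.
\item \emph{Sullivan's rigidity theorem}~\cite{su}, the route the paper suggests. Promote the two end-wise bi-Lipschitz maps to a single marking-preserving bi-Lipschitz map $M\longrightarrow M_\rho$. One way is to glue the two neighborhood maps across the compact core; another is to use the fact that bi-Lipschitz-ness on both ends forces the map between the compact middle parts to be homotopic to a bi-Lipschitz one. Lifting this map gives a quasiconformal conjugacy on $\hat{\mathbb{C}}$ between $\rho_M(\varGamma)$ and $\rho(\varGamma)$. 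Since both limit sets are all of $\hat{\mathbb{C}}$, the conjugacy has no invariant Beltrami differential supported on the domain of discontinuity, because that domain is empty. Sullivan's theorem then says the conjugacy is Möbius, and this yields the isometry $\varphi$.
\end{itemize}

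\textbf{Main obstacle.} The delicate step is producing one globally bi-Lipschitz marking-preserving map from the two homeomorphisms given separately by Corollary~\ref{thm_B}. These need not agree, so I would first try to modify one of them by a homotopy, supported on a compact region, to match the other near its end. I would then check that the resulting map is bi-Lipschitz, which holds because it is bi-Lipschitz off a compact set and smooth there. If this gluing proves awkward, I would fall back on invoking the ending lamination theorem directly once Step~1 is established.
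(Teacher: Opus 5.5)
Your proposal is correct, and its Sullivan route is the argument the paper intends; the paper only says the corollary follows immediately from Corollary~\ref{thm_B} and Sullivan's rigidity theorem. That argument is: apply Corollary~\ref{thm_B} at each of the two degenerate ends, assemble a marking-preserving map $M\to M_\rho$ that is bi-Lipschitz near both ends, lift it to an equivariant quasi-isometry of $\mathbb{H}^3$, and use that a quasiconformal conjugacy of a group with limit set $\hat{\mathbb{C}}$ is M\"obius; the ending lamination theorem is a valid but heavier alternative.

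One correction to your gluing plan: a compactly supported homotopy cannot make the two maps agree near an end. Instead, take the map obtained for $e$ on a neighborhood of $e$ and the map obtained for $e'$ on a neighborhood of $e'$; by your Step~1, their images are neighborhoods of distinct ends of $M_\rho$. Then use a Lipschitz interpolation on the compact middle, and build a coarse inverse the same way. This suffices because only an equivariant quasi-isometry is needed, not a bi-Lipschitz homeomorphism.
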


\section{Preliminaries}

In this section, we present fundamental definitions and notations in forms suitable
to our arguments. Refer to Thurston \cite{th}, Benedetti and Petronio \cite{bp},
Matsuzaki and Taniguchi \cite{mt} and so on for other notations concerning hyperbolic
geometry and to Hempel \cite{he} for 3-manifold topology.

\subsection{Ends of hyperbolic 3-manifolds}

Let $N$ be a complete hyperbolic 3-manifold with finitely generated fundamental 
group and $N_{\mathrm{cusp}}$ the union of parabolic cusps of $N$ with respect to 
a fixed Margulis constant.
Then $M_{\mathrm{main}}=M\setminus \mathrm{Int}\,M_{\mathrm{cusp}}$ is the \emph{main part} of $M$.
By Scott's core theorem \cite{sc}, there exists a compact irreducible 3-dimensional 
submanifold $C$ of $N$ such that the inclusion $C\longrightarrow N$ is a homotopy 
equivalence.
Then $C$ is called a \emph{compact core} of $N$.
By the topological tameness theorem \cite{ag,cg} for hyperbolic 3-manifolds, 
one can choose the core $C$ so that the the closure $E_C$ of each component $N\setminus C$ 
is homeomorphic to $S_C\times [0,\infty)$, where $S_C$ is the frontier of $E_C$ in $N$.
See Figure \ref{f_core}.
\begin{figure}[hbtp]
\centering
\scalebox{0.6}{\includegraphics[clip]{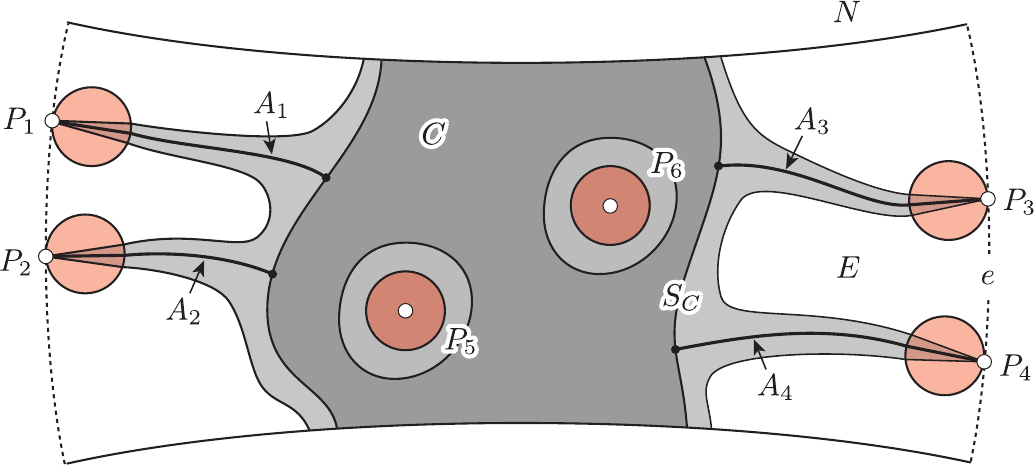}}
\caption{$P_1,\dots,P_4$ represent $\mathbb{Z}$-cusps and $P_5$, $P_6$ $\mathbb{Z}\times\mathbb{Z}$-cusps of $N$.
$S_C$ is the frontier of a component $E_C$ of $N\setminus \mathrm{Int}\,C$.
The union of dark regions is a finite core $\widehat C$.
$E$ is the neighborhood of $e$ with respect to $\widehat C$.
}
\label{f_core}
\end{figure}
Let $P_1,\dots,P_n$ be the $\mathbb{Z}$-cusps of $N$.
Then there exist mutually disjoint half-open annuli $A_1,\dots, A_n$ properly embedded 
in $N\setminus \mathrm{Int}\,C$ such that $A_i\cap P_i$ $(i=1,\dots,n)$ is a totally geodesic cusp and 
$A_i\cap P_j=\emptyset$ for any $j\neq i$.
One can take a regular neighborhood  $C_{\mathcal{A}}$ of $C\cup(\bigcup_{i=1}^n A_i)$ in $N$ of finite volume.
The union $\widehat C$ of $C_{\mathcal{A}}$ and the components of $N\setminus \mathrm{Int}\,C$ containing $\mathbb{Z}\times \mathbb{Z}$-cusps of $N$ is called a \emph{finite core} of $N$.
Let $e$ be an end of $N_{\mathrm{main}}$.
The component $E$ of $N\setminus \mathrm{Int}\,\widehat C$ adjacent to 
$e$ is called the neighborhood of $e$ \emph{with respect to} $\widehat C$.
We say that $e$ is \emph{geometrically finite} if there exists a neighborhood of 
$e$ in $N_{\mathrm{main}}$ disjoint from the convex hull of $N$ and otherwise 
\emph{geometrically infinite}.
By the geometrical tameness theorem \cite{ca93} for hyperbolic 3-manifolds, 
any geometrically infinite end $e$ of $N_{\mathrm{main}}$ is simply degenerate, that is, 
there exists a sequence of simplicial hyperbolic surfaces tending toward $e$.
From now on ends of $N_{\mathrm{main}}$ will simply be called ends of $N$.

\subsection{Bounded cohomology of spaces}
We present some notions concerning bounded cohomology of topological spaces.
See \cite{gr} for details.

Let $C^*(X)$ be the dual space of the singular chain-complex $C_*(X)$ of a topological space $X$ with 
real coefficient.
Consider the subspace $C_b^*(X)$ of $C^*(X)$ consisting of bounded cochains, that is, 
$c\in C_b^n(X)$ means that 
$$\|c\|_\infty=\sup\left\{\,|c(\sigma)|\ ;\ \text{$\sigma:\Delta^n\longrightarrow X$ is a singular $n$-simplex}\right\}<\infty,$$
where $\Delta^n$ is a regular $n$-simplex of edge length 1 in the Euclidean $n$-space.
Since the coboundary operator $\delta^n:C^n(X)\longrightarrow C^{n+1}(X)$ satisfies $\delta^n(C_b^n(X))\subset C_b^{n+1}(X)$, 
the bounded cochain complex $(C_b^*(X),\delta^*_b)$ with $\delta^*_b=\delta^*|_{C_b^*(X)}$ defines the \emph{bounded cohomology} $H_b^n(X,\,\mathbb{R})$
with the seminorm
$$\|\alpha\|_\infty=\inf\left\{\|c\|_\infty\ ;\, \text{$c$ is an element of $Z_b^n(X)$ with $[c]=\alpha$}\right\}$$
for $\alpha\in H_b^n(X,\,\mathbb{R})$, where 
$Z_b^n(X)=(\delta_b^n)^{-1}(0)$.

\medskip

Suppose that $N$ is a complete hyperbolic 3-manifold and 
$p:\mathbb{H}^3\longrightarrow N$ is the locally isometric universal covering.
For any singular $k$-simplex $\sigma:\Delta^k\longrightarrow N$, consider its lift 
$\widetilde \sigma:\Delta^k\longrightarrow \mathbb{H}^3$.
Let $\mathrm{straight}(\widetilde \sigma):\Delta^k\longrightarrow \mathbb{H}^3$ the affine map with respect to the Euclidean structure on $\Delta^3$ and the quadratic model on $\mathbb{H}^3$ 
with $\mathrm{straight}(\widetilde\sigma(v_j))=\widetilde\sigma(v_j)$ for all vertices $v_j$ 
$(j=0,1,\dots,k)$ of $\Delta^k$.
Then 
the map $\mathrm{straight}_{N}(\sigma)=p\circ \mathrm{straight}(\widetilde\sigma):\Delta^k\longrightarrow N$ is called the $k$-simplex obtained by \textit{straightening} $\sigma$.

The (oriented) volume of a $C^1$ singular 3-simplex $\sigma:\Delta^3\longrightarrow N$ is defined by 
\begin{equation}\label{eqn_omega_N}
\mathrm{Vol}(\sigma)=\int_{\Delta^3}\sigma^*(\Omega_N),
\end{equation}
where $\O_N$ is the volume form on $N$.
Let $\omega_N$ be the 3-cocycle on $N$ 
defined by
$$
\omega_N(\sigma)=\mathrm{Vol}(\mathrm{straight}_{N}(\sigma))
$$
for any singular 3-simplex $\sigma:\Delta^3\longrightarrow N$.
Let $\boldsymbol{v}_3$ be the volume $\boldsymbol{v}_3$ of a regular ideal 3-simplex in $\mathbb{H}^3$.
Since $|\omega_N(\sigma)|$ is less than $\boldsymbol{v}_3$  
for any singular 3-simplex $\sigma:\Delta^3\longrightarrow N$, 
$\omega_N$ represents an element $[\omega_N]$ of $H_b^3(N,\,\mathbb{R})$ with 
\begin{equation}\label{eqn_v3}
\|[\omega_N]\|_\infty\leq \boldsymbol{v}_3.
\end{equation}
We say that $[\omega_N]$ is the \emph{bounded fundamental class} of $N$.

\subsection{Bounded cohomology of groups}\label{ss_BCgroup}

We briefly review the bounded cohomology of groups and the bounded fundamental class of 
representations in $\mathrm{PSL}_2(\mathbb{C})$. 
See Section 2 of Farre \cite{fa_imrn} for details.

For a discrete group $G$ and $n\geq 0$, let 
$\ell^\infty(G^{n+1})$ be the $\mathbb{R}$-vector space of functions $f:G^{n+1}\longrightarrow \mathbb{R}$ 
with 
\[
\| f\|_\infty=\sup\{\,|f(g_0,\dots,g_n)|\,;\, (g_0,\dots,g_n)\in G^{n+1}\}<\infty.
\]
The sequence $\bigl(\ell^\infty(G^{n+1})\bigr)_{n=0}^\infty$ has 
a homogeneous coboundary 
$\delta^n:\ell^\infty(G^{n+1})\longrightarrow \ell^\infty(G^{n+2})$ defined by 
\[
\delta^n f(g_0,\dots,g_{n+1})=\sum_{i=0}^{n+1}(-1)^i f(g_0,\dots,\widehat g_i,\dots, g_{n+1}), 
\]
where $\widehat g_i$ means that $g_i$ is omitted.
Consider the left-action of $G$ on $\ell^\infty(G^{n+1})$ with  
\[
g\cdot f(g_1,\dots,g_n)=f(g^{-1}g_1,\dots,g^{-1}g_n)
\]
for $g\in G$ and $f\in \ell^\infty(G^{n+1})$.
For a subgroup $\varLambda$ of $G$, 
we say that $f$ is $\varLambda$-\emph{invariant} if $g\cdot f=f$ for any $g\in \varLambda$ and 
denote by $\ell^\infty(G^{n+1})^\varLambda$ the subspace of $\ell^\infty(G^{n+1})$ consisting of 
$\varLambda$-invariant elements.
From the definition,
\[
\ell^\infty(G^{n+1})^G\subset \ell^\infty(G^{n+1})^\varLambda\subset \ell^\infty(G^{n+1}).
\]
Since $\delta^n(\ell^\infty(G^{n+1})^G)\subset \ell^\infty(G^{n+2})^G$, 
the complex $(\ell^\infty(G^{n+1}), \delta^n|_{\ell^\infty(G^{n+1})^G})_{n\geq 0}$ defines the cohomology
\[
H_b^*(G,\mathbb{R})=H^*(\ell^\infty(G^{*+1})^G),
\]
which is called the \emph{bounded cohomology} of $G$. 
Note that $H_b^n(G,\mathbb{R})$ has the seminorm $\|\cdot\|_\infty$ with  
\[
\|\alpha\|_\infty=\inf\{\,\|f\|_\infty\,;\, \text{$f$ is an element of $Z_b^n(G)$ with $[f]=\alpha$}\}
\]
for $\alpha\in H_b^n(G,\mathbb{R})$, where $Z_b^n(G)=(\delta^n|_{\ell^\infty(G^{n+1})^G})^{-1}(0)$.

\medskip

Let $\rho:G\longrightarrow \mathrm{PSL}_2(\mathbb{C})$ be any representation of $G$.
Then we will define the bounded fundamental class $[\mathrm{Vol}(\rho)]$ of $\rho$ in $H_b^3(G,\mathbb{R})$.

Fix $x\in \mathbb{H}^3\cup \partial \mathbb{H}^3$.
We denote by $\mathrm{Vol}_x(\rho)$ the element of $\ell^\infty(G^{4})^G$ 
such that $\mathrm{Vol}_x(\rho)(g_0,\dots,g_3)$ is the oriented volume of the (ideal) straight 3-simplex in $\mathbb{H}^3$ with the oriented vertices $\rho(g_0)x,\dots,\rho(g_3)x$, 
where $\rho(g_i)$ is supposed to be an element of $\mathrm{Isom}^+(\mathbb{H}^3)$ under the natural identification $\mathrm{PSL}_2(\mathbb{C})=\mathrm{Isom}^+(\mathbb{H}^3)$.
Here we define that $\mathrm{Vol}_x(\rho)(g_0,\dots,g_3)=0$ if the 3-simplex is degenerate.
Since $\delta^3 \mathrm{Vol}_x(\rho)=0$ and $\|\mathrm{Vol}_x(\rho)\|_\infty\leq \boldsymbol{v}_3$, 
$\mathrm{Vol}_x(\rho)$ is a 3-cocyle of $\ell^\infty(G^{4})^G$ and hence it represents 
an element $[\mathrm{Vol}_x(\rho)]$ of $H_b^3(G,\mathbb{R})$ 
with $\|[\mathrm{Vol}_x(\rho)]\|_\infty\leq \boldsymbol{v}_3$.

As is shown in \cite[Subsection 2.4]{fa_imrn}, $[\mathrm{Vol}_x(\rho)]=[\mathrm{Vol}_y(\rho)]$ in 
$H_b^3(G,\mathbb{R})$ for any $x,y\in \mathbb{H}^3\cup \partial\mathbb{H}^3$.
So we may set $[\mathrm{Vol}(\rho)]=[\mathrm{Vol}_x(\rho)]$ and call it the \emph{bounded fundamental class} of $\rho$.
Since $\mathrm{Vol}_{gx}(g\rho g^{-1})=\mathrm{Vol}_x(\rho)$, it follows that $[\mathrm{Vol}(g\rho g^{-1})]=[\mathrm{Vol}(\rho)]$.
This means that $[\mathrm{Vol}(\rho)]$ is invariant 
under $\mathrm{PSL}_2(\mathbb{C})$-conjugacy.
For a subgroup $\varLambda$ of $G$, let 
$i_{\varLambda,G}:\varLambda\longrightarrow G$ be the inclusion and 
$\rho|_\varLambda:\varLambda\to \mathrm{PSL}_2(\mathbb{C})$ the restriction of $\rho$ on $\varLambda$, 
that is, $\rho|_\varLambda= \rho\circ i_{\varLambda,G}$.
It is a standard fact that the homomorphism $i_{\varLambda,G}^*:H_b^3(G,\mathbb{R})\to H_b^3(\varLambda,\mathbb{R})$ 
induced by $i_{\varLambda,G}$ is seminorm non-increasing.
So we have 
\[
\|[\mathrm{Vol}(\rho|_\varLambda)]\|_\infty=\|i_{\varLambda,G}^*[\mathrm{Vol}(\rho)]\|_\infty\leq \|[\mathrm{Vol}(\rho)]\|_\infty.
\]

Now we consider the case that $\rho:G\to \mathrm{PSL}_2(\mathbb{C})$ is discrete and faithful.
Then we know that, for the bounded fundamental class $[\omega_{M_\rho}]$ in $H_b^3(M_{\rho},\mathbb{R})$, 
$i_\rho^*[\omega_{M_\rho}]$ is equal to $[\mathrm{Vol}(\rho)]$ under the natural identification $H_b^3(\Sigma,\mathbb{R})=H_b^3(G,\mathbb{R})$, 
for example see \cite[Subsection 2.2]{fa_gafa}.

\section{Co-amenable restriction}

A subgroup $\varLambda$ of a group $L$ or the inclusion $i:\varLambda\longrightarrow L$ is said to be \emph{co-amenable} if there exists an $L$-invariant mean $m$ on the left coset $L/\varLambda$. 
More precisely, $m$ is a linear functional 
$
m:\ell^\infty(L/\varLambda)\longrightarrow\mathbb{R}
$
satisfying the following conditions.
\begin{itemize}
\item
$m(1)=1$ and $m(f)\ge0$ for any $f\in\ell^\infty(L/\varLambda)$ with $f\geq 0$.
\item
$m(g\cdot f)=m(f)$ 
for any $g\in L$ and $f\in\ell^\infty(L/\varLambda)$.
\end{itemize}
Here the action of $L$ on $\ell^\infty(L/\varLambda)$ is defined by
$(g\cdot f)(x\varLambda)=f(g^{-1}x\varLambda)$.
It is a standard fact that, for any $f\in \ell^\infty(L/\varLambda)$, 
\begin{equation}\label{eqn_mf_f}
|m(f)|\leq \|f\|_\infty.
\end{equation}
We say that $L$ is \emph{amenable} if the trivial group $\{1\}$ is 
a co-amenable subgroup of $L$.

Any finite group $G$ is amenable. 
Indeed, a $G$-invariant mean is explicitly given by 
$m(f) = \frac{1}{|G|} \sum_{g \in G} f(g)$ for $f:G\longrightarrow \mathbb{R}$.
The infinite cyclic group $\mathbb{Z}$ is also amenable.
Although a $\mathbb{Z}$-invariant mean cannot be written down explicitly, it 
can be rigorously defined and represented by employing a non-constructive limit concept known as a Banach limit.

\bigskip

The following lemma is a standard consequence of Gromov's amenable-resolution
construction for bounded cohomology \cite{gr}, 
which is refined by Monod \cite{mo}.

\begin{lemma}\label{l_Lemma3.1}
Let $\varLambda$ be a co-amenable subgroup of $L$. 
Then for any $n\ge0$ and $\alpha\in H_b^n(L;\mathbb R)$,
\begin{equation}\label{eqn_iHL}
\|i_{\varLambda,L}^*\alpha\|_\infty
=
\|\alpha\|_\infty.
\end{equation}
\end{lemma}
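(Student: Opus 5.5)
The inequality $\|i_{\varLambda,L}^*\alpha\|_\infty\le\|\alpha\|_\infty$ is the seminorm non-increasing property recalled in Subsection \ref{ss_BCgroup}, so we only need the reverse inequality. For that we build a norm non-increasing transfer map
\[
T^n:\ell^\infty(\varLambda^{n+1})^{\varLambda}\longrightarrow \ell^\infty(L^{n+1})^{L}
\]
that commutes with the coboundaries and satisfies $T^*\circ i_{\varLambda,L}^*=\mathrm{id}$ on $H_b^n(L,\mathbb R)$. Given this, for any $\varLambda$-cocycle $f$ representing $i^*_{\varLambda,L}\alpha$ we get $\alpha=[T f]$ and $\|Tf\|_\infty\le\|f\|_\infty$. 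Taking the infimum over such $f$ gives $\|\alpha\|_\infty\le\|i_{\varLambda,L}^*\alpha\|_\infty$.

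The transfer is easiest to define on inhomogeneous-free, $\varLambda$-invariant cochains on $L$ itself. A standard fact of Gromov and Ivanov, refined by Monod, is that $H_b^*(\varLambda,\mathbb R)$ is computed by the complex $\ell^\infty(L^{*+1})^{\varLambda}$, since $\ell^\infty(L^{*+1})$ is a strong resolution by relatively injective $\varLambda$-modules. Under this identification, $i_{\varLambda,L}^*$ becomes the inclusion $\ell^\infty(L^{n+1})^L\subset\ell^\infty(L^{n+1})^\varLambda$ of the Preliminaries, and the isomorphism is isometric for the canonical seminorms.

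For $f\in\ell^\infty(L^{n+1})^\varLambda$ and a fixed tuple $(g_0,\dots,g_n)$, consider the function $x\varLambda\mapsto f(x^{-1}g_0,\dots,x^{-1}g_n)$ on $L/\varLambda$. It is well defined because $f$ is $\varLambda$-invariant, and it is bounded by $\|f\|_\infty$. Set
\[
(Tf)(g_0,\dots,g_n)=m\bigl(x\varLambda\mapsto f(x^{-1}g_0,\dots,x^{-1}g_n)\bigr).
\]
Then $Tf$ is $L$-invariant by the $L$-invariance of $m$, and $\|Tf\|_\infty\le\|f\|_\infty$ by \eqref{eqn_mf_f}. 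Since $m$ is linear, $T$ commutes with $\delta$. If $f$ is already $L$-invariant, the integrand is constant, so $Tf=f$ because $m(1)=1$. Hence $T\circ i^*=\mathrm{id}$ already at the cochain level.

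The main obstacle is justifying that the complex $\ell^\infty(L^{*+1})^\varLambda$ computes $H_b^*(\varLambda,\mathbb R)$ with its seminorm, so that the infimum defining $\|i^*_{\varLambda,L}\alpha\|_\infty$ can be taken there. I would handle this directly by an explicit norm non-increasing chain map in each direction. Choose a section $s:L/\varLambda\to L$ of coset representatives, and let $r:L\to\varLambda$ be given by $g=s(g\varLambda)\,r(g)$. Pulling back along $r$ and restricting along the inclusion $\varLambda\to L$ are both norm non-increasing chain maps between $\ell^\infty(\varLambda^{*+1})^\varLambda$ and $\ell^\infty(L^{*+1})^\varLambda$, and they are mutually inverse up to the standard bounded chain homotopy between two equivariant maps into $\ell^\infty$-resolutions. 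This yields an isometric isomorphism in cohomology, and the argument above then completes the proof.
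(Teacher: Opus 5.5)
Your proposal is correct and follows the paper's proof. It uses the same averaging operator $T$ built from the $L$-invariant mean on $L/\varLambda$, which is the identity on $L$-invariant cochains, together with the isometric identification $H_b^*(\varLambda,\mathbb{R})=H^*(\ell^\infty(L^{*+1})^\varLambda)$. The paper cites that identification from Monod's amenable-set theorem (via Frigerio) rather than proving it by hand.

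If you do prove it directly, fix one step: the retraction must satisfy $r(\lambda g)=\lambda\, r(g)$, so use right cosets and write $g=r(g)\,s(\varLambda g)$. Your decomposition $g=s(g\varLambda)\,r(g)$ only gives $r(g\mu)=r(g)\mu$, and then $r^*$ does not preserve $\varLambda$-invariant cochains. With the corrected $r$, the standard bounded chain homotopy you invoke goes through as claimed.
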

\begin{proof}
We may suppose that $L$ is a \emph{set} which admits the left actions of the 
groups $L$ and $\varLambda$. 
For any $x\in L$, the stabilizers $\mathrm{Stab}_L(x)$ and $\mathrm{Stab}_\varLambda(x)$ are 
trivial and hence in particular amenable.
This means that $L$ is an \emph{amenable $L$-set} and also an \emph{amenable $\varLambda$-set}.
Then Monod's amenable-set resolution theorem \cite{mo}  gives the canonical isometric identification of $H_b^*(\varLambda,\mathbb{R})=H^*(\ell^\infty(\varLambda^{*+1})^\varLambda)$ with $H^*(\ell^\infty(L^{*+1})^\varLambda)$.
See Definition 4.20, Lemmas 4.21, 4.22 and Theorem 4.23 in \cite{fr_17} 
for details.
So we can suppose that
\[
H_b^*(\varLambda,\mathbb{R})=H^*(\ell^\infty(L^{*+1})^\varLambda).
\]

Since $\varLambda$ is a co-amenable subset of $L$, there exists 
an $L$-invariant mean $m:\ell^{\infty}(L/\varLambda)\longrightarrow \mathbb{R}$. 
For $f \in \ell^\infty(L^{n+1})^\varLambda$ and $(g_0,\dots, g_n)\in L^{n+1}$,
the correspondence 
\begin{equation}\label{eqn_xHf}
x\varLambda\longmapsto f(x^{-1}g_{0}, \dots, x^{-1}g_{n})
\end{equation}
defines an element of $\ell^\infty(L/\varLambda)$.
Indeed, if we replace the representative $x$ of the coset
$x\varLambda$ by $x\lambda$ with $\lambda\in \varLambda$,
then
\[
\begin{aligned}
f((x\lambda)^{-1}g_0,\ldots,(x\lambda)^{-1}g_n)
&=
f(\lambda^{-1}x^{-1}g_0,\ldots,\lambda^{-1}x^{-1}g_n)\\
&=
f(x^{-1}g_0,\ldots,x^{-1}g_n),
\end{aligned}
\]
where the last equality follows from the $\varLambda$-invariance of $f$.
Therefore the map \eqref{eqn_xHf} 
is independent of the choice of representative of the coset.
So one can define the element $Tf$ of $\ell^\infty (L^{n+1})$ by 
\begin{equation}\label{eqn_Tf}
Tf(g_0,\dots,g_n)=m\bigl(x\varLambda\longmapsto f(x^{-1}g_{0}, \dots, x^{-1}g_{n})\bigr).
\end{equation}
Since moreover $m$ is $L$-invariant, $Tf$ is also $L$-invariant.
Thus \eqref{eqn_Tf} defines the averaging operator
\[
T:\ell^\infty(L^{n+1})^\varLambda\longrightarrow \ell^\infty(L^{n+1})^L.
\]
By \eqref{eqn_mf_f}, $\|Tf\|_\infty\le \|f\|_\infty$.
Since $T$ commutes with the homogeneous coboundary, it induces the 
homomorphism   
$T_*: H_b^n(\varLambda,\mathbb{R})\longrightarrow H_b^n(L,\mathbb{R})$ with 
$
\|T_*(\beta)\|_\infty\leq \|\beta\|_\infty
$
for any $\beta\in H_b^n(\varLambda,\mathbb{R})$.

If $f$ is an element of $\ell^\infty(L^{n+1})^L$, then $f(x^{-1}g_0,\dots, x^{-1}g_n)=f(g_0,\dots,g_n)$ for any $x\in L$.
Hence the function averaged in \eqref{eqn_Tf} is constant, and since moreover $m(1)=1$, we obtain $Tf=f$.
This shows that $T_*\circ i_{\varLambda,L}^*=\mathrm{id}_{H_b^n(L,\mathbb{R})}$ and hence 
\[
\|\alpha\|_\infty=\|T_*\circ i_{\varLambda,L}^*(\alpha)\|_\infty\leq \| i_{\varLambda,L}^*(\alpha)\|_\infty.
\]
for any $\alpha\in H_b^n(L,\mathbb{R})$.
On the other hand, since $i_{\varLambda,L}^*$ is seminorm non-increasing, 
\[
\| i_{\varLambda,L}^*(\alpha)\|_\infty\leq \|\alpha\|_\infty.
\]
The equality \eqref{eqn_iHL} is obtained immediately from these inequalities.
\end{proof}

\begin{lemma}\label{l_Lemma4.1}
Suppose that $\rho:\varGamma\longrightarrow \mathrm{PSL}_2(\mathbb{C})$ is a non-faithful representation such that 
$G=\rho(\varGamma)$ is non-trivial.
Then there exists a finitely generated free subgroup $A$ of $\varGamma$ with $\rho(A)=G$.
\end{lemma}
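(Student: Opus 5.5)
The idea is to use the surface group structure directly. Since $\rho$ is non-faithful, the kernel $K=\ker\rho$ is a nontrivial normal subgroup of $\varGamma=\pi_1(\Sigma)$. I would first show that $K$ has infinite index. If $K$ had finite index, then $G=\rho(\varGamma)\cong\varGamma/K$ would be a finite subgroup of $\mathrm{PSL}_2(\mathbb{C})$. This case has to be handled separately (see below).

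Assume first that $[\varGamma:K]=\infty$. Then $K$ is the fundamental group of an infinite-sheeted cover $\widetilde\Sigma_K\to\Sigma$, which is a noncompact surface. Hence $K$ is a free group of infinite rank, since it is nontrivial and normal in a non-elementary surface group.

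Now build $A$ by choosing a generating set $a_1,\dots,a_{2g}$ of $\varGamma$. Since $\rho$ is not injective, one expects to find elements $b_1,\dots,b_{2g}\in\varGamma$ with $\rho(b_i)=\rho(a_i)$ that generate a free subgroup. A cleaner route is to pass to a finite-index subgroup. Take a finite-index subgroup $\varGamma'\subset\varGamma$ whose cover $\Sigma'$ contains a nonseparating simple closed curve $\gamma$ with $[\gamma]\in K$. Such a $\gamma$ exists because a nontrivial element of $K$ lifts to a simple closed curve in some finite cover, by LERF (Scott). One can also arrange it to be nonseparating.

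Cutting $\Sigma'$ along $\gamma$ gives a surface $\Sigma'\setminus\gamma$ with boundary. Its fundamental group $F$ is free of finite rank. Together with $\gamma$ and one dual loop $\delta$, it generates $\varGamma'$. Since $\rho(\gamma)=1$, we get $\rho(\varGamma')=\langle\rho(F),\rho(\delta)\rangle$. The subgroup $A'=\langle F,\delta\rangle$ is the fundamental group of $\Sigma'$ minus a point of $\gamma$, or of a graph-like spine, hence free of finite rank.

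To recover all of $G$ rather than only $\rho(\varGamma')$, adjoin coset representatives $c_1,\dots,c_k$ of $\varGamma'$ in $\varGamma$. The group $\langle A',c_1,\dots,c_k\rangle$ need not be free. Instead, take the free subgroup of $\varGamma$ given by the fundamental group of $\Sigma$ with a point removed. Precisely, take $A=\pi_1$ of the image in $\Sigma$ of $\Sigma'\setminus\{\text{point on }\gamma\}$ for a suitable choice. The simplest correct version, which I would try first, goes as follows. In $\varGamma=\langle a_1,b_1,\dots\mid\prod[a_i,b_i]\rangle$, if $\gamma$ can be chosen in $\Sigma$ itself as a nonseparating simple curve, then $\varGamma$ is generated by the free group $\pi_1(\Sigma\setminus\gamma)$ plus a dual curve. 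The rank $2g-1$ subgroup omitting $\gamma$ maps onto $G$, and it is free since it is a proper free factor-type subgroup. The subgroup generated by all standard generators except $\gamma$ is $\pi_1$ of a surface with boundary, hence free.

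\textbf{Main obstacle.} In general one cannot arrange $\gamma$ to be simple in $\Sigma$. To overcome this I would use the Nielsen-type fact that any subgroup of $\varGamma$ of infinite index is free. It then suffices to find a finitely generated, infinite-index subgroup $A$ with $\rho(A)=G$. Write $G$ as generated by $\rho(a_1),\dots,\rho(a_{2g})$ and pick $1\ne k\in K$. Replace $a_1$ by $a_1k^N$ for large $N$, and let $A=\langle a_1k^N,a_2,\dots,a_{2g}\rangle$. Then $\rho(A)=G$. The remaining step is to show that $A$ has infinite index for suitable $N$. If $A$ had finite index, it would be a closed surface group of genus at least $g$ generated by $2g$ elements, which forces index $1$ by the Euler characteristic together with the rank $2\,\mathrm{genus}$ (Zieschang). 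Index $1$ then has to be excluded for some $N$ via a homology or growth argument. Alternatively, I would choose $A$ generated by $2g-1$ elements, using that $G$ is generated by fewer elements after absorbing $k$. Then $A$ cannot have finite index, because finite-index subgroups have rank at least $2g$.

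The finite case, where $G$ is finite and $K$ has finite index, needs separate handling. There one takes $A$ to be a cyclic or free subgroup surjecting onto $G$ by choosing preimages of generators of $G$ and applying the same rank argument.
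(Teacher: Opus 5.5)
There is a genuine gap: your argument is incomplete exactly in the case where $G$ needs $2g$ generators.

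Your reduction is sound as far as it goes. Every infinite-index subgroup of $\varGamma$ is free. A subgroup of index $d\geq 2$ is a closed surface group of rank $2d(g-1)+2\geq 4g-2>2g$. Hence any subgroup generated by fewer than $2g$ elements, or any proper subgroup generated by at most $2g$ elements, is free of finite rank. This settles every case where $G$ can be generated by fewer than $2g$ elements, including finite $G$, since finite subgroups of $\mathrm{PSL}_2(\mathbb{C})$ are $2$-generated.

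In the remaining case you must exhibit $2g$ elements of $\varGamma$ whose images generate $G$ but which do not generate $\varGamma$. Neither of your proposals does this.
\begin{itemize}
\item The claim that $G$ ``is generated by fewer elements after absorbing $k$'' is false. Compose $\varGamma\to H_1(\Sigma;\mathbb{Z})\cong\mathbb{Z}^{2g}$ with an embedding of $\mathbb{Z}^{2g}$ into the parabolic translations $z\mapsto z+c$, using $2g$ values of $c$ independent over $\mathbb{Q}$. For a discrete example, kill a separating simple closed curve cutting off a one-holed torus, giving $\varGamma\to\mathbb{Z}^2*\pi_1(\Sigma_{g-1})$. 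This group is Kleinian by Klein--Maskit combination and has rank $2g$ by Grushko's theorem.
\item In both examples $\ker\rho\subseteq[\varGamma,\varGamma]$, so every $k\in\ker\rho$ is null-homologous. The images of $a_1k^N,a_2,\dots,a_{2g}$ in $H_1$ therefore always form a basis, and your homology argument cannot exclude $\langle a_1k^N,a_2,\dots,a_{2g}\rangle=\varGamma$.
\item The ``growth argument'' is never specified.
\item If $k$ happens to lie in $\langle a_2,\dots,a_{2g}\rangle$, for instance $k=[a_2,a_3]$ in the first example, that subgroup equals $\varGamma$ for every $N$.
\end{itemize}
Your earlier cut-along-$\gamma$ route also fails as written. $\pi_1(\Sigma'\setminus\gamma)$ already contains a conjugate of $[\gamma]$, so $\langle F,\delta\rangle=\varGamma'$, which is not free.

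The missing ingredient is dynamical, not homological. You need a mechanism that makes the modified lifts generate a \emph{free} group; such a group then automatically has infinite index.
\begin{itemize}
\item The paper observes that $K=\ker\rho$, a non-trivial normal subgroup of the cocompact Fuchsian group $\varGamma$, has no bounded orbit. Hence it is of general type by \cite[Lemma~2.5]{gor}.
\item For arbitrary lifts $x_i$ of a finite generating set of $G$, \cite[Proposition~2.3]{gor} then supplies $u_i,v_i\in K$ such that the elements $y_i=v_ix_iu_i$ form a free basis. This is a ping-pong argument with loxodromic elements of $K$ whose fixed points are in general position.
\item Since $\rho(y_i)=\rho(x_i)$, the subgroup $A=\langle y_1,\dots,y_r\rangle$ is free with $\rho(A)=G$. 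This needs no case distinction on the size or rank of $G$ and no appeal to Zieschang's theorem.
\end{itemize}
Your idea of replacing $a_1$ by $a_1k^N$ points in this direction. To finish, you would have to perturb \emph{every} lift by large powers of suitably chosen loxodromics in $K$ and prove freeness by ping-pong, or by a Klein--Maskit combination with the convex cocompact subgroup $\langle a_2,\dots,a_{2g}\rangle$. That is exactly the content of the cited proposition.
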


\begin{proof}
Since $\varGamma$ is a closed surface group of genus greater than one, we may assume that $\varGamma$ is a Fuchsian group such that the action of $\varGamma$ on $\mathbb{H}^2$ 
is faithful, proper and cocompact.
In particular, $\varGamma$ is \emph{of general type} in the sense of \cite[Definition 1.1]{gor}, that is, $\varGamma$ is non-elementary and does not fix any point of $\partial\mathbb{H}^2$.
By~\cite[Lemma~2.5]{gor}, the normal subgroup $K=\ker(\rho)$ either has a bounded orbit or is also of general type.
Since any non-trivial element of $K$ is hyperbolic, 
it does not have bounded orbits 
and hence $K$ is of general type.

Choose a finite generating set
$g_1,\ldots,g_r$
of $G$, and let
$x_1,\ldots,x_r$ be elements of $\varGamma$ with $\rho(x_i)=g_i$.
Applying Proposition~2.3 of~\cite{gor} to $K$, we obtain elements
$a_i,b_i\in K$
such that $y_i=b_ix_ia_i$ for $i\ge1$ form a free basis of a free subgroup of $\varGamma$.
Since $\rho(a_i)=\rho(b_i)=1$,
we have $\rho(y_i)=\rho(x_i)$ for every $i$.
Therefore 
$A=\langle y_1,\ldots,y_r\rangle$
is a free subgroup of rank $r$ satisfying $\rho(A)=G$.
\end{proof}

Recall that $\varGamma$ is a closed surface group of genus $>1$ and $M$ is a 
complete hyperbolic 3-manifold with holonomy $\rho_M:\varGamma\longrightarrow \mathrm{PSL}_2(\mathbb{C})$ 
and without parabolic cusps.

\begin{lemma}\label{l_Lemma5.1}
Let $A$ be the finitely generated free subgroup  of $\varGamma$ given in Lemma~\ref{l_Lemma4.1}.
Then 
$i_{A,\varGamma}^*[\mathrm{Vol}(\rho_M)]=0$ in $H_b^3(A,\mathbb{R})$.
\end{lemma}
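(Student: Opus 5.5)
The plan is to pass to the covering of $M$ corresponding to $A$, show that this covering is convex cocompact of infinite volume, and then prove that its volume cocycle is the coboundary of a bounded $2$-cochain. Set $N_A=\mathbb{H}^3/\rho_M(A)$. Since $i_{A,\varGamma}^*[\mathrm{Vol}(\rho_M)]=[\mathrm{Vol}(\rho_M|_A)]$ and $\rho_M|_A$ is discrete and faithful, this class equals the bounded fundamental class $[\omega_{N_A}]$ under $H_b^3(N_A,\mathbb{R})=H_b^3(A,\mathbb{R})$ (end of Subsection \ref{ss_BCgroup}). So it suffices to show $[\omega_{N_A}]=0$.

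\textbf{Step 1: $N_A$ is convex cocompact.} $A$ is a finitely generated free group, hence not a surface group. Every finite-index subgroup of $\varGamma$ is a closed surface group, so $A$ has infinite index in $\varGamma$. By hypothesis $M$ has a simply degenerate end, and by tameness $M\cong\Sigma\times\mathbb{R}$. Thurston's covering theorem, as extended by Canary, then applies: the infinite-index finitely generated subgroup $\rho_M(A)$ of $\rho_M(\varGamma)$ is geometrically finite. Here the absence of parabolic cusps is used. $\rho_M(\varGamma)$ has no parabolic elements, so $\rho_M(A)$ has none either, and a geometrically finite group without parabolics is convex cocompact. Thus $N_A$ is a convex cocompact handlebody. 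In particular its convex core is compact and $N_A$ has infinite volume.

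\textbf{Step 2: a bounded primitive of the volume form.} I would find a bounded smooth $2$-form $\eta$ on $N_A$ with $d\eta=\Omega_{N_A}$. For an infinite-volume convex cocompact manifold, the Cheeger isoperimetric constant $h(N_A)$ is positive. One can see this from $\lambda_0(N_A)>0$, which holds because the critical exponent satisfies $\delta<2$ (Sullivan). Alternatively, it follows directly from the product structure of the ends outside the compact convex core, which have exponentially growing cross sections. By the Gromov–Brooks criterion, positivity of the Cheeger constant on a manifold of bounded geometry implies that every bounded top-degree form, in particular $\Omega_{N_A}$, has a bounded primitive $\eta$. Concretely, one can take $\eta=\iota_X\Omega_{N_A}$ for a bounded vector field $X$ with $\mathrm{div}\,X=1$; such an $X$ can be built from the outward normal flow of the convex core, corrected on a compact set.

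\textbf{Step 3: conclusion.} Define a $2$-cochain $b$ by $b(\tau)=\int_{\mathrm{straight}(\tau)}\eta$ for singular $2$-simplices $\tau$. Straight triangles in $\mathbb{H}^3$ have area less than $\pi$, so $\|b\|_\infty\le\pi\|\eta\|_\infty$. Straightening commutes with taking faces, so Stokes' theorem gives
\[
\omega_{N_A}(\sigma)=\int_{\mathrm{straight}(\sigma)}d\eta=\int_{\partial\,\mathrm{straight}(\sigma)}\eta=\delta b(\sigma).
\]
Hence $[\omega_{N_A}]=0$ in $H_b^3(N_A,\mathbb{R})$, and therefore $i_{A,\varGamma}^*[\mathrm{Vol}(\rho_M)]=0$.

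I expect Step 2 to be the main obstacle, since it requires a bounded, not merely existing, primitive of the volume form. I would first try the explicit vector field construction using the nearest-point retraction onto the convex core, and fall back on the Cheeger-constant criterion if that becomes messy. Step 1 is mostly a citation, but it is where the no-cusp and degenerate-end hypotheses enter, so the exact form of the covering theorem I cite must be checked.
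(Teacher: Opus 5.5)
Your proof is correct, but it reaches the vanishing by a different route from the paper. Step 1 is the paper's own argument. The paper also passes to the cover $M_A$ and applies Canary's covering theorem. It discards the virtually fibered alternative because $M$ has infinite volume. It discards the finite-to-one alternative because, with $M\cong S\times\mathbb{R}$, any end neighborhood $E$ carries all of $\pi_1(M)$, so a finite cover of $E$ inside $M_A$ would force $A$ to have finite index in $\varGamma$. You should write out this last deduction rather than quoting ``infinite index implies geometrically finite'' as the theorem, since that is exactly where the no-cusp hypothesis enters.

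The divergence is what comes next. The paper simply cites \cite[Theorem 1]{so_duke}: if all ends are geometrically finite, the class vanishes. You instead prove the vanishing directly by writing $\omega_{N_A}=\delta b$ with $b$ bounded. That argument is sound for the following reasons:
\begin{itemize}
\item straightening commutes with face maps;
\item straight triangles have area $<\pi$;
\item a convex cocompact infinite-volume $N_A$ has bounded geometry, since the $1$-Lipschitz nearest-point retraction onto the convex hull bounds the injectivity radius from below;
\item $N_A$ is non-amenable, so the Gromov--Sullivan criterion (equivalently Block--Weinberger) gives a bounded $2$-form $\eta$ with $d\eta=\Omega_{N_A}$.
\end{itemize}
Two small corrections. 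Passing from $\lambda_0>0$ to a positive Cheeger constant uses Buser's inequality; Cheeger's inequality goes the other way. And if you build $X$ from the distance to the convex core, you must smooth it, since that function is only $C^{1,1}$ and Stokes needs $\eta$ at least $C^1$.

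What each route buys: yours is self-contained and shows transparently why the class dies. However, as written it depends on convex cocompactness, since bounded geometry fails in rank-one cusps. The citation to \cite{so_duke} covers geometrically finite ends with cusps as well, and matches the paper's uniform reliance on that theorem elsewhere.
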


\begin{proof}
Let 
$p:M_A\longrightarrow M$
be the covering corresponding to
$A$.
Suppose first that
$M_A$ has a simply degenerate end $e$ and introduce a contradiction.
By Canary's covering theorem \cite{ca}, 
$e$ has the 
neighborhood $\widetilde E$ with respect to a finite core of $M_A$ 
such that one of the following (i) and (ii) holds.
\begin{enumerate}[(i)]
\item
$ p|_{\widetilde E}:\widetilde E\longrightarrow M$ is a finite covering over an end neighborhood $E$ of $M$.
\item
$M$ has finite volume and has a finite cover which fibers over the circle.
\end{enumerate}
Since $M$ has a simply degenerate end, $M$ has infinite volume and hence 
alternative (ii) cannot occur.
Suppose that (i) did hold.
Then $(p|_{\widehat E})_*(\pi_1(\widetilde E))$ is a finite index subgroup of $\pi_1(E)$.
Since $M$ has no parabolic cusps, $M$ is homeomorphic to $S\times \mathbb{R}$ 
for a closed surface $S$ with $\pi_1(S)=\varGamma$, which has the two ends 
$S\times \{-\infty\}$ and $S\times \{\infty\}$.
Since $E$ is an end neighborhood in $M$, we may take $E$ so that it corresponds to either $S\times (-\infty,0]$ or $S\times [0,\infty)$ in $S\times \mathbb{R}$.
So the inclusion $i:E\longrightarrow M$ is a homotopy equivalence and 
hence $i_*(\pi_1(E))=\pi_1(M)$.
It follows that $p_*(\pi_1(\widetilde E))=i_*\circ (p|_E)_*(\pi_1(\widetilde E))$ is a finite index 
subgroup of $\pi_1(M)$.
Since $p_*(\pi_1(\widetilde E))$ is a finite-index subgroup of $\pi_1(M)=\varGamma$ and 
contained in $A$, the subgroup $A$ is also of finite index in $\varGamma$.
This contradicts that any free subgroup of a non-trivial closed surface group 
has infinite index.
Thus any end of $M_A$ is geometrically finite.
By \cite[Theorem 1]{so_duke}, we have $i_{A,\varGamma}^*[\mathrm{Vol}(\rho_M)]=0$.
\end{proof}

The following lemma is more or less folklore.

\begin{lemma}\label{l_Lemma6.1}
Let $G$ be a discrete torsion-free subgroup of $\mathrm{PSL}_2(\mathbb{C})$ such that $N = \mathbb{H}^3/G$ has finite volume.
For the inclusion $i_G:G\longrightarrow \mathrm{PSL}_2(\mathbb{C})$, 
\[
\|[\mathrm{Vol}(i_G)]\|_\infty = \boldsymbol{v}_3.
\]
\end{lemma}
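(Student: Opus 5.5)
The plan is to prove the two inequalities separately. The upper bound $\|[\mathrm{Vol}(i_G)]\|_\infty\leq \boldsymbol{v}_3$ is already available. As recalled in Subsection \ref{ss_BCgroup}, the cocycle $\mathrm{Vol}_x(i_G)$ has sup norm at most $\boldsymbol{v}_3$. So all the work is in the lower bound, which I would obtain by the usual duality between bounded cohomology and the $\ell^1$-norm of homology (Gromov's simplicial volume argument).

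For the lower bound, first identify $[\mathrm{Vol}(i_G)]$ with the bounded fundamental class $[\omega_N]\in H_b^3(N,\mathbb{R})=H_b^3(G,\mathbb{R})$. This holds because $N$ is aspherical and $i_G$ is the holonomy, in the same way as the identification $i_\rho^*[\omega_{M_\rho}]=[\mathrm{Vol}(\rho)]$ recalled at the end of Subsection \ref{ss_BCgroup}.

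If $N$ is closed, I take a fundamental cycle $z$ of $N$. Let $c$ be any bounded cocycle representing $[\omega_N]$, so $c=\omega_N+\delta b$. Then
\[
\mathrm{Vol}(N)=\langle \omega_N,z\rangle=\langle c,z\rangle\leq \|c\|_\infty\,\|z\|_1 .
\]
Taking the infimum over $z$ and using Gromov--Thurston's $\|N\|=\mathrm{Vol}(N)/\boldsymbol{v}_3$ gives $\|c\|_\infty\geq \boldsymbol{v}_3$. Hence $\|[\omega_N]\|_\infty\geq\boldsymbol{v}_3$.

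If $N$ is cusped, I would instead pair with relative cycles or locally finite cycles. One clean route is as follows. Take a compact truncation $N_0$ of $N$, whose boundary consists of tori with amenable fundamental groups. By Gromov's theorem, $H_b^*(N_0,\partial N_0)\to H_b^*(N_0)$ is isometric for amenable boundary; equivalently, use the isometric isomorphism $H_b^3(N_0,\partial N_0)\cong H_b^3(N_0)$. Then pair with relative fundamental cycles of $N_0$, whose $\ell^1$-norm is bounded below by $\mathrm{Vol}(N)/\boldsymbol{v}_3$ (again Gromov--Thurston, for cusped manifolds). To make the pairing evaluate to $\mathrm{Vol}(N)$, choose representatives of $[\omega_N]$ given by $\mathrm{Vol}_x$ with $x$ a parabolic fixed point. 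Alternatively, use Thurston's chains of large almost-regular ideal simplices directly.

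A more hands-on alternative, which avoids relative cohomology, is to construct explicit 3-cycles $z_k$ in $G$ (or on $N$) made of straight simplices that are nearly regular ideal simplices. These come from Thurston's smearing construction and satisfy $\langle \mathrm{Vol}, z_k\rangle/\|z_k\|_1\to\boldsymbol{v}_3$. Since any cobounded modification $\delta b$ pairs to zero with the cycles $z_k$, every representative has norm at least $\boldsymbol{v}_3$. The smearing construction needs finite volume so that the averaging measure is finite. This is where the hypothesis enters, via compactly supported approximations in the cusped case.

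The main obstacle I expect is the cusped case. There a genuine fundamental cycle does not exist in $H_3(N)$, so one must justify that the bounded class still detects volume: either through the amenability of the cusp groups and the isometric relative isomorphism, or through a careful smearing with truncation errors tending to zero. I would first try the relative route. I would cite Gromov for the isometry and Thurston/Gromov for $\|N,\partial N\|=\mathrm{Vol}(N)/\boldsymbol{v}_3$, with the closed case serving as the model.
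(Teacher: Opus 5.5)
Your proposal is essentially the paper's proof. The upper bound is the sup-norm bound on the volume cocycle, and the lower bound is $\mathrm{Vol}(N)=|\langle[\omega_N],[N]\rangle|\le\|[\omega_N]\|_\infty\,\|[N]\|_1$ combined with the Gromov--Thurston equality $\|[N]\|_1=\mathrm{Vol}(N)/\boldsymbol{v}_3$.

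The paper does not treat the cusped case separately. It takes $[N]$ to be Thurston's $\ell^1$-fundamental class, which is your smearing alternative. Such a class pairs with bounded classes because $\langle\delta b,z\rangle=\langle b,\partial z\rangle=0$ for bounded $b$ and $\ell^1$-cycles $z$. So the relative, amenable-boundary route is not needed.

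If you do follow the relative route, two corrections are needed.

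First, you have the direction of the key input reversed. From $\mathrm{Vol}(N)\le\|c\|_\infty\|z\|_1$ you need relative fundamental cycles with $\|z\|_1$ arbitrarily close to $\mathrm{Vol}(N)/\boldsymbol{v}_3$. That is the hard inequality $\|N_0,\partial N_0\|\le\mathrm{Vol}(N)/\boldsymbol{v}_3$, the relative version of Gromov--Thurston. The lower bound you quote is the easy direction and gives nothing here.

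Second, a single parabolic fixed point $x$ makes the boundary simplices degenerate only when $N$ has one cusp. In general, each lift of $\partial N_0$ must be sent $G$-equivariantly to the center of its horosphere, which means one parabolic point per cusp. Only then do you get a cocycle that vanishes on $\partial N_0$ and pairs to $\mathrm{Vol}(N)$.
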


\begin{proof}
By \eqref{eqn_v3}, $\|[\omega_N]\|_\infty \le \boldsymbol{v}_3$. 
Let $[N]$ denote the $\ell^1$-fundamental class of $N$.
From the definition \eqref{eqn_omega_N} of $\omega_N$, 
\begin{equation}\label{eqn_volN}
\mathrm{Vol}(N) = |\langle [\omega_N], [N] \rangle| \leq \|[\omega_N]\|_\infty \|[N]\|_1,
\end{equation}
where $\|[N]\|_1$ is the simplicial volume (Gromov invariant) of $N$, 
see \cite[Chapter 6]{th}.
The Gromov-Thurston proportionality formula (see Theorem 6.2 and 
Lemma 6.5.4 in \cite{th}) implies $\|[N]\|_1 = \mathrm{Vol}(N)/\boldsymbol{v}_3$.
By this fact together with \eqref{eqn_volN}, we have 
$\|[\omega_N]\|_\infty \ge \boldsymbol{v}_3$ and hence $\|[\mathrm{Vol}(i_G)]\|_\infty =\|[\omega_N]\|_\infty= \boldsymbol{v}_3$.
\end{proof}

\begin{lemma}\label{l_Proposition 7.1}
Suppose that $G$ is a finitely generated,
discrete, torsion-free subgroup of $\mathrm{PSL}_2(\mathbb{C})$.
Then one of the following {\rm (i)} and {\rm (ii)} holds.
\begin{enumerate}[\rm (i)]
\item
$\|[\mathrm{Vol}(i_G)]\|_\infty= 0$.
\item
There is a (not necessarily finitely generated) free subgroup $K$ of $G$ such that 
$\|\iota_{K,G}^*[\mathrm{Vol}(i_G)]\|_\infty=\boldsymbol{v}_3$.
\end{enumerate}
\end{lemma}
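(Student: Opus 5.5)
Write $N=\mathbb{H}^3/G$; since $G$ is finitely generated, discrete and torsion-free, $N$ is a complete hyperbolic 3-manifold with finitely generated fundamental group. Under the identification recalled at the end of Subsection \ref{ss_BCgroup}, $[\mathrm{Vol}(i_G)]$ corresponds to $[\omega_N]$. The proof splits into cases according to the geometry of $N$, using the tameness and covering results of the preliminaries.

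\emph{Case 1: $N$ has finite volume.} Take $K=G$ itself? This fails, since $G$ need not be free. Instead we use Lemma \ref{l_Lemma6.1}, which already gives $\|[\mathrm{Vol}(i_G)]\|_\infty=\boldsymbol{v}_3$, and then look for a free subgroup $K$ that is co-amenable in $G$. Lemma \ref{l_Lemma3.1} then yields $\|i_{K,G}^*[\mathrm{Vol}(i_G)]\|_\infty=\|[\mathrm{Vol}(i_G)]\|_\infty=\boldsymbol{v}_3$. The natural candidate is to write $G$ (or a finite index subgroup) as a quotient $G=F/R$ of a free group, but we need a subgroup of $G$, not of $F$. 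So instead we use the virtual fibering theorem (Agol, Wise) when $N$ is closed, or the analogous result in the cusped case: a finite cover $N'$ fibers over $S^1$ with fiber $S$. The fiber subgroup $\pi_1(S)$ is normal in $\pi_1(N')$ with quotient $\mathbb{Z}$, which is amenable, so $\pi_1(S)$ is co-amenable in $\pi_1(N')$. Since $\pi_1(N')$ has finite index in $G$, it is co-amenable in $G$, and co-amenability is transitive. However, $\pi_1(S)$ is a surface group, not free. To get a free group, we pass to the kernel $K$ of a surjection $\pi_1(S)\to\mathbb{Z}$ dual to a nonseparating curve. This $K$ is infinitely generated and free, because it is the fundamental group of a noncompact surface, and it is co-amenable in $\pi_1(S)$. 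Hence $K$ is co-amenable in $G$ and (ii) holds. This is why $K$ is allowed to be infinitely generated.

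\emph{Case 2: $N$ has infinite volume and some end is simply degenerate.} Here $\|[\omega_N]\|_\infty=\boldsymbol{v}_3$ by \cite[Theorem 1]{so_duke}. Take a simply degenerate end $e$ with neighborhood $E\cong S\times[0,\infty)$, where $S$ is a possibly punctured surface after removing cusps. Its fundamental group $\pi_1(S)\subset G$ has the property that the covering $N_S$ has a simply degenerate end by Canary's covering theorem. Again by \cite{so_duke}, this gives $\|i^*_{\pi_1(S),G}[\mathrm{Vol}(i_G)]\|_\infty=\boldsymbol{v}_3$. We then need a free subgroup. If $S$ has punctures, $\pi_1(S)$ is already free and we are done. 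If $S$ is closed, we take $K$ to be the kernel of $\pi_1(S)\to\mathbb{Z}$ as above, which is co-amenable in $\pi_1(S)$, and apply Lemma \ref{l_Lemma3.1} to $K\subset\pi_1(S)$.

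\emph{Case 3: $N$ has infinite volume and all ends are geometrically finite.} Then \cite[Theorem 1]{so_duke} gives $[\omega_N]=0$, so (i) holds.

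The main obstacle is Case 1, which relies on virtual fibering, including the cusped version, and on the transitivity of co-amenability through finite index. I would check that transitivity explicitly, by composing invariant means, before writing up Cases 1 and 2.
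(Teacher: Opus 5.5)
Your Cases 1 and 3 match the paper: virtual fibering, then the kernel of an epimorphism from the fiber group onto $\mathbb{Z}$, combined with Lemmas \ref{l_Lemma3.1} and \ref{l_Lemma6.1}. Applying Lemma \ref{l_Lemma3.1} successively along $K\subset P\subset G_1\subset G$ makes transitivity of co-amenability unnecessary.

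\textbf{The gap is in Case 2, where you write $\pi_1(S)\subset G$.} The boundary component $S_C$ of the compact core adjacent to the degenerate end, with $E_C\cong S_C\times[0,\infty)$, need not be incompressible in $N$. For example, if $N$ is the interior of a handlebody with a degenerate end, then $i_*:\pi_1(S_C)\to G$ has nontrivial kernel. In that situation the kernel of a homomorphism $\phi:\pi_1(S_C)\to\mathbb{Z}$ is not a subgroup of $G$. Its image $i_*(\ker\phi)$ need not be free either. If $\phi$ is nonzero on some element of $\ker i_*$, then $i_*(\ker\phi)$ has finite index in $\widetilde G=i_*(\pi_1(S_C))$. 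In general $\widetilde G$ contains closed surface subgroups (the $Q_i$ below), so such a finite-index subgroup is not free. The same objection applies to your punctured case: $\pi_1(S)$ being free says nothing about its image in $G$.

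Falling back on the composite homomorphism $\ker\phi\to G$ does not prove the statement, since (ii) asks for a subgroup. It would also need a separate argument that the pullback along the non-injective map $\pi_1(S_C)\to\widetilde G$ still has norm $\boldsymbol{v}_3$; the mapping theorem does not apply because that kernel is not amenable.

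A smaller point: the lifted degenerate end is not a consequence of Canary's covering theorem, which goes in the opposite direction. It holds simply because $E_C$ lifts isometrically to the cover associated with $\widetilde G$.

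\textbf{The missing idea is to work with the image $\widetilde G$ itself.}
\begin{enumerate}
\item The cover $\widetilde N\to N$ associated with $\widetilde G$ has a degenerate end, so $\|i_{\widetilde G,G}^*[\mathrm{Vol}(i_G)]\|_\infty=\boldsymbol{v}_3$ by \cite[Theorem 1]{so_duke}.
\item Its compact core is either an $I$-bundle or a compression body with exterior boundary $S_C$ and interior boundary $\Sigma_1,\dots,\Sigma_m$. Hence $\widetilde G\cong F_r*Q_1*\cdots*Q_m$ with each $Q_i=\pi_1(\Sigma_i)$ a closed surface group.
\item If $m=0$, then $K=\widetilde G$ is already free.
\item If $m\geq 1$, choose an epimorphism $\chi:\widetilde G\to\mathbb{Z}$ that is nontrivial on \emph{every} $Q_i$. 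Otherwise some $Q_i\subset\ker\chi$ and the kernel is not free.
\item By normality, $\ker\chi\cap tQ_it^{-1}=t\ker(\chi|_{Q_i})t^{-1}$, which is free as the fundamental group of an infinite cyclic cover of $\Sigma_i$. The Kurosh subgroup theorem then shows that $K=\ker\chi$ is free.
\item Since $\widetilde G/K\cong\mathbb{Z}$ is amenable, Lemma \ref{l_Lemma3.1} gives (ii).
\end{enumerate}
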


\begin{proof}
Write $N = \mathbb{H}^3/G$. 
If $G$ is elementary, then it is amenable.
Then its positive-degree bounded cohomology vanishes (for example see \cite[Corollary 7.5.12]{mo}) and  (i) holds. 
Assume henceforth that $G$ is non-elementary.

First we consider the case when $N$ is of finite volume (possibly $N$ has parabolic cusps).
By Agol's virtual fibering theorem (see \cite{ag08, hw, wa, ag13} and so on), there is a finite-index subgroup $G_1$ of $G$ admitting an exact sequence
\[
1\longrightarrow P\longrightarrow G_1\longrightarrow \mathbb{Z}\longrightarrow 1,
\]
where $P$ is the fundamental group of a fiber surface. 
Since $G_1$ has finite index in $G$, the inclusion
$i_{G_1,G}$ is co-amenable. Moreover, since
$G_1/P\cong\mathbb Z$ is amenable, the inclusion
$i_{P,G_1}$ is co-amenable.
Choose an epimorphism $\psi:P\longrightarrow \mathbb{Z}$ arbitrarily and set $K = \ker(\psi)$. 
The corresponding
infinite cyclic cover of a fiber is a connected non-compact surface, so 
$K$ is a free group. 
Moreover, since $P/K\cong \mathbb{Z}$, $K$ is co-amenable in $P$. 
By Lemmas \ref{l_Lemma3.1} and \ref{l_Lemma6.1}, 
\[
\|i_{K,G}^*[\mathrm{Vol}(i_G)]\|_\infty=\|i_{K,P}^*\circ i_{P,G_1}^*\circ i_{G_1,G}^*[\mathrm{Vol}(i_G)]\|_\infty=\|[\mathrm{Vol}(i_G)]\|_\infty=\boldsymbol{v}_3.
\]

Next we suppose that $N$ has infinite volume.
By \cite[Theorem 1]{so_duke}, if all ends of $N$ is geometrically finite, then 
$[\mathrm{Vol}(i_G)]=0$ in $H_b^3(G,\mathbb{R})$.
Hence (i) holds.
So it remains the case that $N$ has a simply degenerate end $e$.
Let $E$ be the neighborhood of $e$ in $N$ with respect to a fixed finite core $\widehat C$ and let $C$ be a compact core of $N$ contained in $\widehat C$, see Figure \ref{f_core}.
Suppose that $E_C$ is the component of $N\setminus \mathrm{Int}\,C$ containing $E$ and $S_C$ is the frontier of $E_C$ in $N$.
Note that $E_C$ is homeomorphic to $S_C\times [0,\infty)$.
Consider the covering $p:\widetilde N\longrightarrow N$ associated with $\widetilde G:=i_*(\pi_1(S_C))
\subset  \pi_1(N)=G$, where $i:S_C\longrightarrow N$ is the inclusion.
Then $E_C$ is lifted to a submanifold $\widetilde E_C$ of $\widetilde N$, which 
contains a submanifold $\widetilde E$ such that $p|_{\widetilde E}:\widetilde E\to N$ is 
an isometry onto $E$.
It follows that $\widetilde N$ has a simply degenerate end corresponding to $e$ via $p$.
Again by \cite[Theorem 1]{so_duke}, we have 
\begin{equation}\label{eqn_(11)}
\|i_{\widetilde G,G}^*[\mathrm{Vol}(i_G)]\|_\infty=\boldsymbol{v}_3.
\end{equation}
A compact core $\widetilde C$ of $\widetilde N$ is either homeomorphic to $\Sigma_1\times [0,1]$ for some closed connected surface $\Sigma_1$ of genus $>1$ or a compression body 
the boundary $\partial \widetilde C$ of which consists of closed surfaces $\Sigma_0,\Sigma_1,\dots, \Sigma_m$ such that  
$\mathrm{genus}(\Sigma_0)>\mathrm{genus}(\Sigma_i)>0$ for $i=1,\dots,m$.
By Lemma 2.4.2 and Section 2.2 in \cite{bw}, $\widetilde G$ has a
free-product decomposition
\begin{equation}\label{eqn_(12)}
\widetilde G\cong F_r*Q_1*\cdots * Q_m,\quad  Q_i = \pi_1(\Sigma_i),
\end{equation}
for some finitely generated free group $F_r$. 
In the former $I$-product case, we suppose that $F_r=\{1\}$ and $m=1$.
When $m = 0$, take $K=\widetilde G = F_r$.
Then $K$ is free and \eqref{eqn_(11)} proves (ii).

Assume $m\geq 1$. 
For each $i$, choose an epimorphism $\chi_i:Q_i\longrightarrow \mathbb{Z}$ arbitrarily. 
By the fundamental property
of the free product, there is an epimorphism
$\chi:\widetilde G\longrightarrow \mathbb{Z}$ with 
$\chi(F_r)=\{1\}$ and $\chi|_{Q_i}=\chi_i$ for $i=1,\dots,m$.
We will show that $K=\ker(\chi)$ is free. 
The Kurosh subgroup theorem (for example see \cite[Corollary 4.9.1]{mks}) gives sets
of double-coset representatives $T_i\subset \widetilde G$ and a (possibly infinitely generated) free group 
$\mathcal{F}$ such that
factors,
\begin{equation}\label{eqn_(14)}
K\cong \mathcal{F} * \bigl(*_{i=0}^m \bigl(*_{t\in T_i}K\cap tQ_it^{-1}\bigr)\bigr),
\end{equation}
where $Q_0=F_r$.
For $i = 0$, every intersection $K\cap tF_rt^{-1}$ is a subgroup of a free group, so it is free by
Nielsen–Schreier's theorem (for example see \cite[Corollary 2.9]{mks}).
Since $K$ is a normal subgroup of $\widetilde G$, 
\[
K \cap tQ_it^{-1} = t(K\cap Q_i)t^{-1} = t\ker(\chi_i)t^{-1}
\]
for $i\geq 1$. 
Since $\ker(\chi_i)$ is a fundamental group of an infinite cyclic covering of $\Sigma_i$, it is a free group. 
Every factor in \eqref{eqn_(14)} is therefore free.
Since any free product of free groups is free, $K$ is also free. 
Since $\widetilde G/K\cong \mathbb{Z}$,  $K$ is co-amenable in $\widetilde G$. 
Hence Lemma \ref{l_Lemma3.1} and \eqref{eqn_(11)} yield
\[
\|i_{K,G}^*[\mathrm{Vol}(i_G)]\|_\infty=\|i_{K,\widetilde G}^*\circ i_{\widetilde G,G}^*[\mathrm{Vol}(i_G)]\|_\infty
=\|i_{\widetilde G,G}^*[\mathrm{Vol}(i_G)]\|_\infty= \boldsymbol{v}_3.
\]
This proves (ii) and so completes the proof of Lemma \ref{l_Proposition 7.1}.
\end{proof}

\section{Proofs of Theorem \ref{thm_A} and Corollary \ref{thm_B}}

\begin{proof}[Proof of Theorem \ref{thm_A}]
First we consider the case when $G$ is torsion-free, so one can apply 
Lemmas \ref{l_Lemma6.1} and \ref{l_Proposition 7.1}.
Since $G$ is finitely generated and discrete by the assumption of the theorem, 
one of the conditions (i) and (ii) of Lemma \ref{l_Proposition 7.1} holds.
Since $M$ has a simply degenerate end, by \cite[Theorem 1]{so_duke} 
$\|[\mathrm{Vol}(\rho_M)]\|_\infty=\boldsymbol{v}_3$.

Since $\rho^*:H_b^3(G,\mathbb{R})\longrightarrow H_b^3(\varGamma,\mathbb{R})$ is a seminorm non-increasing 
homomorphism, if $\|[\mathrm{Vol}(i_G)]\|_\infty=0$ in $H_b^3(G,\mathbb{R})$, then 
\[
\|[\mathrm{Vol}(\rho)]\|_\infty=\|[\mathrm{Vol}(i_G\circ \rho)]\|_\infty=\|\rho^*[\mathrm{Vol}(i_G)]\|_\infty=0.
\]
Thus we have
\begin{align*}
\|[\mathrm{Vol}(\rho)]-[\mathrm{Vol}(\rho_M)]\|_\infty&\geq \|[\mathrm{Vol}(\rho_M)]\|_\infty-
\|[\mathrm{Vol}(\rho)]\|_\infty\\
&=\|[\mathrm{Vol}(\rho_M)]\|_\infty=\boldsymbol{v}_3.
\end{align*}
This shows \eqref{eqn_geq_v3}.

So it suffices to consider the case of $\|[\mathrm{Vol}(i_G)]\|_\infty\neq 0$.
Lemma \ref{l_Proposition 7.1} gives a free subgroup $K$ of $G$ with
\begin{equation}\label{eqn_(16)}
\|i_{K,G}^*[\mathrm{Vol}(i_G)]\|_\infty = \boldsymbol{v}_{3}.
\end{equation}
By Lemma \ref{l_Lemma4.1}, there is a finitely generated free subgroup $A$ of $ \Gamma$ with $\rho(A) = G$. 
Lemma \ref{l_Lemma5.1} gives 
$
i_{A,\varGamma}^*[\mathrm{Vol}(\rho_M)]= 0
$
in $H_b^3(A,\mathbb{R})$.
Choose a free basis $\mathcal{B}$ of $K$. Since $\rho|_{A} \colon A \rightarrow G$ is surjective, for each $b \in \mathcal{B}$ there exists an element $\widetilde b \in A$ with $\rho(\widetilde{b}) = b$. 
The universal property of the free group defines a homomorphism
$
s \colon K \longrightarrow A
$
with $(\rho|_{A}) \circ s = i_{K,G}$.  
It follows that 
\begin{align*}
\|[\mathrm{Vol}(\rho)]-[\mathrm{Vol}(\rho_M)]\|_\infty&\ge 
\|i_{A,\varGamma}^*([\mathrm{Vol}(\rho)]-[\mathrm{Vol}(\rho_M)])\|_\infty\\
&=\|[\mathrm{Vol}(\rho|_A)]-[i_{A,\varGamma}^*\mathrm{Vol}(\rho_M)]\|_\infty\\
&=\|[\mathrm{Vol}(\rho|_A)]\|_\infty=\|(\rho|_A)^*[\mathrm{Vol}(i_G)]\|_\infty\\
&\ge \|s^*(\rho|_A)^*[\mathrm{Vol}(i_G)]\|_\infty =\|i_{K,G}^*[\mathrm{Vol}(i_G)]\|_\infty=\boldsymbol{v}_3.
\end{align*}
Thus \eqref{eqn_geq_v3} holds if $G$ is torsion-free.

Next we consider the case when $G$ has a torsion.
By Selberg's Lemma (for example see \cite[Theorem 2.29]{mt}), there exists 
a finitely generated torsion-free subgroup $\widetilde G$ of $G$ with finite index.
Then $\widetilde\varGamma=\rho^{-1}(\widetilde G)$ is a subgroup of $\varGamma$ of finite index.
In particular, $\widetilde \varGamma$ is a closed surface group of genus $>1$.
Since moreover $\widetilde M=\mathbb{H}^3/\widetilde \varGamma$ finitely covers $M$, $\widetilde M$ also has 
a simply degenerate end.
Since $1\in \widetilde G$, we have
$\ker(\rho)\subset\widetilde\varGamma$ and hence
$\ker(\rho|_{\widetilde\varGamma})=\ker(\rho)\neq\{1\}$.
So one can apply the former result to  
the torsion-free representations $\rho_M|_{\widetilde \varGamma}$ and $\rho|_{\widetilde \varGamma}$.
Then  
\begin{align*}
\|[\mathrm{Vol}(\rho)]-[\mathrm{Vol}(\rho_M)]\|_\infty
&\geq \|i_{\widetilde \varGamma,\varGamma}^*([\mathrm{Vol}(\rho)]-[\mathrm{Vol}(\rho_M)])\|_\infty\\
&=
 \|[\mathrm{Vol}(\rho|_{\widetilde \varGamma})]-[\mathrm{Vol}(\rho_M|_{\widetilde \varGamma})]\|_\infty
\geq \boldsymbol{v}_3.
\end{align*}
Thus \eqref{eqn_geq_v3} holds in the torsion case as well as in the torsion-free case.
\end{proof}

\begin{proof}[Proof of Corollary \ref{thm_B}]
Theorem \ref{thm_A} implies that, if the inequality \eqref{eqn_geq_v3} fails, then 
$\rho$ is either non-discrete or faithful.
However Claim 1 in the proof of \cite[Theorem D]{so_tran}\footnote{The proof is due to the referee of \cite{so_tran} and not to the author.} shows that 
$\rho$ is discrete in this case and hence $\rho$ must be faithful.
Then $M_\rho=\mathbb{H}^3/\rho(\varGamma)$ is a complete hyperbolic 3-manifold 
with a marking $i_\rho:\Sigma\longrightarrow M_\rho$.
Under the natural identification of $H_b^3(\varGamma,\mathbb{R})=H_b^3(\Sigma,\mathbb{R})$, we have 
$[\mathrm{Vol}(\rho_M)]=i_M^*[\omega_M]$ and $[\mathrm{Vol}(\rho)]=i_\rho^*[\omega_{M_\rho}]$.
Let $\varphi:M\longrightarrow M_\rho$ is a marking-preserving homeomorphism, 
that is, $\varphi\circ i_M$ is homotopic to $i_\rho$.
Since $\varphi\circ i_M$ is homotopic to $i_\rho$, we have
$i_M^*\circ\varphi^*=i_\rho^*$ on bounded cohomology.
Since moreover $i_M^*:H_b^3(M,\mathbb{R})\longrightarrow H_b^3(\Sigma,\mathbb{R})$ is an isometric isomorphism, 
the failure of \eqref{eqn_geq_v3} implies  
\begin{align*}
\|\varphi^*[\omega_{M_\rho}]-[\omega_M]\|_\infty
&=\|i_M^*\circ \varphi^*[\omega_{M_\rho}]-i_M^*[\omega_M]\|_\infty\\
&=\|[\mathrm{Vol}(\rho)]-[\mathrm{Vol}(\rho_M)]\|_\infty<\boldsymbol{v}_3
\end{align*}
in $H_b^3(M,\mathbb{R})$.
So the proof is reduced to the comparison theorem \cite[Theorem A]{so_tran} for bounded fundamental 
classes of complete hyperbolic 3-manifolds, which finishes the proof of Corollary \ref{thm_B}.
\end{proof}

\section*{Acknowledgement}
Y.~Nakano was supported by JSPS KAKENHI Grant  Number 23K03188 and JST PRESTO Grant Number JPMJPR25K8.
T.~Soma was supported by JSPS KAKENHI Grant Number 26K06888.

\end{document}